\newcommand{\T}{{\cal T}}
\newcommand{\set}[1]{\left\{#1\right\}}
\newcommand {\cp}{\mathfrak{X}(\pi (M))}
\def\Section#1{\vspace{30truept}\addtocounter{section}{1}\setcounter{thm}{0}
\setcounter{equation}{0}{\noindent\Large\bf
    \arabic{section}.~~#1}\par \vspace{12pt}}
\newtheorem{thm}{Theorem}[section]
\newtheorem{lem}[thm]{Lemma}
\newtheorem{prop}[thm]{Proposition}
\newtheorem{defn}[thm]{Definition}
\newtheorem{rem}[thm]{Remark}
\numberwithin{equation}{section}
\begin{document}
\title{{\textbf{Some Types of Recurrence in Finsler geometry}}} 
\author{{\bf A. Soleiman$^{1}$  and  Nabil L. Youssef\,$^{\,2}$}}
\date{}

\maketitle                     
\vspace{-1.15cm}
\begin{center}
{$^{1}$Department of Mathematics, Faculty of Science,\\ Benha
University, Benha, Egypt\\amr.hassan@fsci.bu.edu.eg, amrsoleiman@yahoo.com}
\end{center}
\begin{center}
{$^{2}$Department of Mathematics, Faculty of Science,\\ Cairo
University, Giza, Egypt\\nlyoussef@sci.cu.edu.eg, nlyoussef2003@yahoo.fr}
\end{center}

\vspace{0.2cm}
\maketitle

\vspace{0.2cm}
\hfill \emph{Dedicated to the memory of Waleed A. Elsayed}
\vspace{0.6cm}


\noindent{\bf Abstract.}
The pullback  approach to global Finsler geometry is adopted.
Three classes of recurrence in Finsler geometry  are introduced and investigated: simple recurrence, Ricci recurrence and concircular recurrence.
Each of these classes consists of four types of recurrence.
The interrelationships between the different types of recurrence are studied.
The generalized concircular recurrence, as a new concept, is singled out.

\medskip
\noindent{\bf Keywords:\/}\,  recurrent; generalized recurrent; Ricci recurrent;
  generalized Ricci recurrent; concircularly recurrent; generalized concircularly recurrent.

\medskip
\noindent{\bf MSC 2010}: 53C60, 53B40, 58B20.
\bigskip


\begin{center}
\large{\bf Introduction}.
\end{center}
Many types of recurrent Riemannian manifolds have been studied by many authors (e.g., \cite{R3, R4, R2, R5, R1}).
 On the other hand, some types of recurrent Finsler spaces have been also studied (e.g., \cite{F3, F2, F1}).
\par
In this paper, we gather all known types of Finsler recurrence (related to Cartan connection), besides some new ones, in a single general setting.
We study \emph{intrinsically} three classes of recurrence: simple recurrence, Ricci recurrence and concircular recurrence.
Each of these classes consists of four types of recurrence.
The interrelationships between the different types of recurrence are investigated. A special emphasis is focused on the new concept of generalized concircular recurrence.
At the end of the paper we provide a concise diagram presenting the relationships among the different types of Finsler recurrences treated.
\newpage

\Section{Notation and Preliminaries}

In this section, we give a brief account of the basic concepts
 of the pullback approach to intrinsic Finsler geometry necessary for this work. For more
 details, we refer to \cite{r58, r86, r94, r96}. We
 shall use the notations of \cite{r86}.

 In what follows, we denote by $\pi: \T M\longrightarrow M$ the subbundle of nonzero vectors
tangent to $M$, $\mathfrak{F}(TM)$ the algebra of $C^\infty$ functions on $TM$, $\cp$ the $\mathfrak{F}(TM)$-module of differentiable sections of the pullback bundle $\pi^{-1}(T M)$.
The elements of $\mathfrak{X}(\pi (M))$ will be called $\pi$-vector
fields and will be denoted by barred letters $\overline{X} $. The
tensor fields on $\pi^{-1}(TM)$ will be called $\pi$-tensor fields.
The fundamental $\pi$-vector field is the $\pi$-vector field
$\overline{\eta}$ defined by $\overline{\eta}(u)=(u,u)$ for all
$u\in TM$.
\par
We have the following short exact sequence of vector bundles
$$0\longrightarrow
 \pi^{-1}(TM)\stackrel{\gamma}\longrightarrow T(\T M)\stackrel{\rho}\longrightarrow
\pi^{-1}(TM)\longrightarrow 0 ,\vspace{-0.1cm}$$ with the well known
definitions of  the bundle morphisms $\rho$ and $\gamma$. The vector
space $V_u (\T M)= \{ X \in T_u (\T M) : d\pi(X)=0 \}$  is the vertical space to $M$ at $u$.
\par
Let $D$ be  a linear connection on the pullback bundle $\pi^{-1}(TM)$.
 We associate with $D$ the map \vspace{-0.1cm} $K:T \T M\longrightarrow
\pi^{-1}(TM):X\longmapsto D_X \overline{\eta} ,$ called the
connection map of $D$.  The vector space $H_u (\T M)= \{ X \in T_u
(\T M) : K(X)=0 \}$ is called the horizontal space to $M$ at $u$ .
   The connection $D$ is said to be regular if
$$ T_u (\T M)=V_u (\T M)\oplus H_u (\T M) \,\,\,  \forall \, u\in \T M.$$

If $M$ is endowed with a regular connection, then the vector bundle
   maps $
 \gamma,\, \rho |_{H(\T M)}$ and $K |_{V(\T M)}$
 are vector bundle isomorphisms. The map
 $\beta:=(\rho |_{H(\T M)})^{-1}$
 will be called the horizontal map of the connection
$D$.
\par
 The horizontal ((h)h-) and
mixed ((h)hv-) torsion tensors of $D$, denoted by $Q $ and $ T $
respectively, are defined by \vspace{-0.2cm}
$$Q (\overline{X},\overline{Y})=\textbf{T}(\beta \overline{X}\beta \overline{Y}),
\, \,\,\, T(\overline{X},\overline{Y})=\textbf{T}(\gamma
\overline{X},\beta \overline{Y}) \quad \forall \,
\overline{X},\overline{Y}\in\mathfrak{X} (\pi (M)),\vspace{-0.2cm}$$
where $\textbf{T}$ is the (classical) torsion tensor field
associated with $D$.
\par
The horizontal (h-), mixed (hv-) and vertical (v-) curvature tensors
of $D$, denoted by $R$, $P$ and $S$
respectively, are defined by
$$R(\overline{X},\overline{Y})\overline{Z}=\textbf{K}(\beta
\overline{X}\beta \overline{Y})\overline{Z},\quad
 {P}(\overline{X},\overline{Y})\overline{Z}=\textbf{K}(\beta
\overline{X},\gamma \overline{Y})\overline{Z},\quad
 {S}(\overline{X},\overline{Y})\overline{Z}=\textbf{K}(\gamma
\overline{X},\gamma \overline{Y})\overline{Z}, $$
 where $\textbf{K}$
is the (classical) curvature tensor field associated with $D$.
\par
The contracted curvature tensors of $D$, denoted by $\widehat{{R}}$, $\widehat{ {P}}$ and $\widehat{ {S}}$ (known
also as the (v)h-, (v)hv- and (v)v-torsion tensors respectively), are defined by
$$\widehat{ {R}}(\overline{X},\overline{Y})={ {R}}(\overline{X},\overline{Y})\overline{\eta},\quad
\widehat{ {P}}(\overline{X},\overline{Y})={
{P}}(\overline{X},\overline{Y})\overline{\eta},\quad \widehat{
{S}}(\overline{X},\overline{Y})={
{S}}(\overline{X},\overline{Y})\overline{\eta}.$$
\begin{thm} {\em\cite{r94}} \label{th.1} Let $(M,L)$ be a Finsler
manifold and  $g$ the Finsler metric defined by $L$. There exists a
unique regular connection $\nabla$ on $\pi^{-1}(TM)$ such
that\vspace{-0.2cm}
\begin{description}
  \item[(a)]  $\nabla$ is  metric\,{\em:} $\nabla g=0$,

  \item[(b)] The (h)h-torsion of $\nabla$ vanishes\,{\em:} $Q=0
  $,
  \item[(c)] The (h)hv-torsion $T$ of $\nabla$\, satisfies\,\emph{:}
   $g(T(\overline{X},\overline{Y}), \overline{Z})=g(T(\overline{X},\overline{Z}),\overline{Y})$.
\end{description}
\par
 Such a connection is called the Cartan
connection associated with  the Finsler manifold $(M,L)$.
\end{thm}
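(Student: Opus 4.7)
\noindent\textit{Proof plan.} I plan a Koszul-type argument adapted to the pullback formalism. For uniqueness, I will derive explicit scalar formulas for $g(\nabla_{\beta\overline{X}}\overline{Y},\overline{Z})$ and $g(\nabla_{\gamma\overline{X}}\overline{Y},\overline{Z})$ directly from (a)--(c); non-degeneracy of $g$ then pins down $\nabla_{\beta\overline{X}}\overline{Y}$ and $\nabla_{\gamma\overline{X}}\overline{Y}$, and the splitting $T\T M = V(\T M)\oplus H(\T M)$ extends this to $\nabla_X\overline{Y}$ for arbitrary $X\in\mathfrak{X}(\T M)$. For existence, I will take the canonical nonlinear connection arising from the geodesic spray of $L$ to supply $\beta,\gamma,K$, declare $\nabla$ by the same formulas, and check (a), (b), (c), $\mathfrak{F}(TM)$-linearity in $X$ and the Leibniz rule in $\overline{Y}$ by direct substitution.

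\medskip

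The horizontal formula is obtained as follows. The definition of the (h)h-torsion gives $Q(\overline{X},\overline{Y}) = \nabla_{\beta\overline{X}}\overline{Y} - \nabla_{\beta\overline{Y}}\overline{X} - \rho[\beta\overline{X},\beta\overline{Y}]$, so (b) converts each antisymmetrized derivative into a single $\rho$-bracket. I would then form the standard Koszul cyclic combination of $\beta\overline{X}\cdot g(\overline{Y},\overline{Z})$, $\beta\overline{Y}\cdot g(\overline{Z},\overline{X})$ and $-\beta\overline{Z}\cdot g(\overline{X},\overline{Y})$ (legal by metric compatibility), substitute the previous identity to rewrite the three differences of $\nabla_{\beta\overline{\cdot}}\overline{\cdot}$'s as $\rho$-brackets of $\beta\overline{\cdot}$'s, and isolate $2g(\nabla_{\beta\overline{X}}\overline{Y},\overline{Z})$ in closed form. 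For the vertical formula, $\rho\circ\gamma = 0$ yields $T(\overline{X},\overline{Y}) = \nabla_{\gamma\overline{X}}\overline{Y} - \rho[\gamma\overline{X},\beta\overline{Y}]$; hence (c) is equivalent to
\[
g(\nabla_{\gamma\overline{X}}\overline{Y},\overline{Z}) - g(\nabla_{\gamma\overline{X}}\overline{Z},\overline{Y}) = g(\rho[\gamma\overline{X},\beta\overline{Y}],\overline{Z}) - g(\rho[\gamma\overline{X},\beta\overline{Z}],\overline{Y}),
\]
and adding this antisymmetric-in-$(\overline{Y},\overline{Z})$ identity to the vertical instance of (a), which is symmetric in $(\overline{Y},\overline{Z})$, solves for $g(\nabla_{\gamma\overline{X}}\overline{Y},\overline{Z})$ at once.

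\medskip

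The hard part, and the bulk of the work in the existence half, is that neither $\rho[\beta\overline{X},\beta\overline{Y}]$ nor $\rho[\gamma\overline{X},\beta\overline{Y}]$ is individually $\mathfrak{F}(TM)$-linear in its $\pi$-vector arguments, so the Koszul right-hand sides are not manifestly tensorial. Verifying that the correction terms produced by the Leibniz rule on $\nabla$ and on $[\,\cdot\,,\,\cdot\,]$ conspire to cancel — so that the defined $\nabla_{\beta\overline{X}}\overline{Y}$ and $\nabla_{\gamma\overline{X}}\overline{Y}$ are genuinely $\mathfrak{F}(TM)$-linear in $\overline{X}$ and behave correctly in $\overline{Y}$ — is where all three of (a), (b) and (c) are jointly invoked and where the calculation is densest. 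Regularity is inherited from the nonlinear connection, and conditions (b) and (c) are immediate from the formulas by their construction, so the remaining checks reduce to the tensoriality bookkeeping just described.
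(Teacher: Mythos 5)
First, a point of reference: the paper offers no proof of this theorem at all --- it is imported verbatim from \cite{r94} --- so your attempt can only be compared with the argument given there, which is indeed a Koszul-type construction built on the canonical nonlinear connection. Your overall route is therefore the standard one. However, as written, your plan contains a genuine circularity that the real proof must resolve and that you never address: the maps $K$ and $\beta$, and hence the splitting $T_u(\T M)=V_u(\T M)\oplus H_u(\T M)$, are not data given in advance. They are manufactured from the unknown connection itself via $K(X)=\nabla_X\overline{\eta}$ and $H_u(\T M)=\ker K$. Your two Koszul formulas express $\nabla_{\beta\overline{X}}\overline{Y}$ and $\nabla_{\gamma\overline{X}}\overline{Y}$ through $\rho[\beta\overline{X},\beta\overline{Y}]$ and $\rho[\gamma\overline{X},\beta\overline{Y}]$, i.e.\ through $\nabla$'s own horizontal lift. (The term $\rho[\gamma\overline{X},\beta\overline{Y}]$ happens to be independent of the choice of horizontal distribution, but $\rho[\beta\overline{X},\beta\overline{Y}]$ emphatically is not.) Non-degeneracy of $g$ therefore only yields that every regular connection satisfying (a)--(c) is determined by \emph{its own} horizontal map; two such connections could a priori have different horizontal distributions and your argument would not distinguish them. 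To finish uniqueness you must additionally prove that (a)--(c) force the horizontal distribution --- equivalently, the associated nonlinear connection --- to be the canonical (Barthel) one; this is where the homogeneity of $L$ enters and it is the genuinely Finslerian part of the theorem, entirely absent from your plan.

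The same issue resurfaces in your existence half. After defining $\nabla$ by Koszul formulas relative to the spray-induced splitting, you must verify the deflection identities $\nabla_{\beta\overline{X}}\overline{\eta}=0$ and $K\circ\gamma=\mathrm{id}$, i.e.\ that the connection map of the constructed $\nabla$ reproduces the splitting you started from. Otherwise the constructed connection's \emph{own} horizontal map $\beta'=(\rho|_{H'})^{-1}$ need not equal the $\beta$ you used, and conditions (b) and (c) --- which are phrased in terms of $\nabla$'s own $\beta$ and $\gamma$ --- would have been checked against the wrong maps, while regularity itself ($T_u(\T M)=V_u\oplus\ker K$) would remain unproved. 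This is not ``inherited from the nonlinear connection''; it is a separate computation. By contrast, the tensoriality bookkeeping you single out as the hard part is routine. So the skeleton is right, but the decisive steps --- uniqueness of the horizontal distribution and the deflection conditions --- are missing.
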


The only linear connection we deal with in this paper is the Cartan connection.


\Section{Three types of Finsler Recurrence}

In this section, we introduce three classes of recurrent Finsler spaces which
will be the object of our investigation in the next sections. These notions are defined in Riemannian geometry \cite{R3, R4, R2, R5, R1}. We extend them to the Finslerian  case.

\smallskip
For a Finsler manifold  $(M,L)$,  we set the following notations:
\vspace{-8pt}
\begin{eqnarray*}
\stackrel{h}\nabla&:&\text{the $h$-covariant derivatives associated
with Cartan connection},\\
\text{Ric} &:& \text{the  horizontal  Ricci tensor of Cartan connection},\\
 r &:& \text{the horizontal scalar curvature of Cartan connection},\\
G(\overline{X},\overline{Y})\overline{Z} &:=& g(\overline{X},\overline{Z})
\overline{Y}-g(\overline{Y},\overline{Z})\overline{X},\\
C &:=& {R}-\frac{r}{n(n-1)}\, {G}: \text{ the concircular curvature tensor},\\
\textbf{R}(\overline{X},\overline{Y},\overline{Z},\overline{W}) &:=& g(R(X,Y)Z,W),\\
\textbf{G}(\overline{X},\overline{Y},\overline{Z},\overline{W}) &:=& g(G(X,Y)Z,W),\\
\textbf{C}(\overline{X},\overline{Y},\overline{Z},\overline{W}) &:=& g(C(X,Y)Z,W).
\end{eqnarray*}

A Finsler manifold is said to be horizontally integrable if its
horizonal distribution is completely integrable (or, equivalently, $\widehat{R}=0)$.

\smallskip

\begin{defn}\label{def.1a} Let $(M,L)$ be a Finsler manifold  of dimension $n\geq3$ with non-zero $h$-curvature tensor
 ${R}$. Then, $(M,L)$ is said to be:
 \begin{description}
   \item[(a)] recurrent Finsler manifold $(F_{n})$ if \,\,$\stackrel{h}{\nabla} {R}= A\otimes {R}$,
   \item[(b)] 2-recurrent Finsler manifold $(2F_{n})$ if \,\,$\stackrel{h}{\nabla}\stackrel{h}{\nabla} {R}= \alpha\otimes {R}$,
   \item[(c)] generalized recurrent Finsler manifold $(GF_{n})$ if \,\,$\stackrel{h}{\nabla} {R}= A\otimes {R}+B\otimes {G}$,
   \item[(d)] generalized 2-recurrent Finsler manifold $(G(2F_{n}))$ if \,\,$\stackrel{h}{\nabla}\stackrel{h}{\nabla} {R}= \alpha\otimes {R}+\mu \otimes {G}$,
 \end{description}
  where $A$ and $B$ (resp. $\alpha$ and $\mu$ ) are non-zero scalar 1-forms (resp. 2-forms) on $TM$, and
 positively homogenous of degree zero in $y$, called the recurrence forms.
\par In particular, if \,$\stackrel{h}{\nabla} {R}=0$, then $(M,L)$ is
called symmetric.
\end{defn}

\begin{defn}\label{def.2a} Let $(M,L)$ be a Finsler manifold  of dimension $n\geq3$ with non-zero horizontal Ricci tensor
 $\emph{\text{Ric}}$. Then, $(M,L)$ is said to be:
 \begin{description}
   \item[(a)] Ricci recurrent Finsler manifold $(RF_{n})$ if \,\,$\stackrel{h}{\nabla} \emph{\text{Ric}}= A\otimes \emph{\text{Ric}}$,
   \item[(b)] 2-Ricci recurrent Finsler manifold $(2RF_{n})$ if \,\,$\stackrel{h}{\nabla}\stackrel{h}{\nabla} \emph{\text{Ric}}= \alpha\otimes \emph{\text{Ric}}$,
   \item[(c)] generalized Ricci recurrent Finsler manifold $(GRF_{n})$ if \,\,$\stackrel{h}{\nabla} \emph{\text{Ric}}= A\otimes \emph{\text{Ric}}+B\otimes {g}$,
   \item[(d)] generalized 2-Ricci recurrent Finsler manifold $(G(2RF_{n}))$ if $$\stackrel{h}{\nabla}\stackrel{h}{\nabla} \emph{\text{Ric}}= \alpha\otimes \emph{\text{Ric}}+\mu \otimes {g},$$
 \end{description}
  where $A$ and $B$ (resp. $\alpha$ and $\mu$ ) are as given in Definition \ref{def.1a}.

In particular, if \,$\stackrel{h}{\nabla} \emph{\text{Ric}}=0$, then $(M,L)$ is
called Ricci symmetric.
\end{defn}

\begin{defn}\label{def.3a} Let $(M,L)$ be a Finsler manifold  of dimension $n\geq3$ with non-zero concircular curvature tensor $C$. Then, $(M,L)$ is said to be:
 \begin{description}
   \item[(a)] concircularly recurrent Finsler manifold $(CF_{n})$ if \,\,$\stackrel{h}{\nabla} {C}= A\otimes {C},$
   \item[(b)] 2-concircularly recurrent Finsler manifold $(2CF_{n})$ if \,\,$\stackrel{h}{\nabla}\stackrel{h}{\nabla} {C}= \alpha\otimes {C},$
   \item[(c)] generalized concircularly recurrent Finsler manifold $(GCF_{n})$ if
   $$\stackrel{h}{\nabla} {C}= A\otimes {C}+B\otimes {G},$$
   \item[(d)] generalized 2-concircularly recurrent Finsler manifold $(G(2CF_{n}))$ if
   $$\stackrel{h}{\nabla}\stackrel{h}{\nabla} {C}= \alpha\otimes {C}+\mu \otimes {G},$$
 \end{description}
  where $A$ and $B$ (resp. $\alpha$ and $\mu$ ) are as given in Definition \ref{def.1a}.
\par
In particular, if \,$\stackrel{h}{\nabla} {C}=0$, then $(M,L)$ is
called concircularly symmetric.
\end{defn}

We quote the following two Lemmas from \cite{F1}; they are very useful in the sequel.
\begin{lem}\label{lem.1}  For a horizontally integrable Finsler manifold, we have:
\begin{description}
  \item[(a)]$\mathfrak{S}_{\overline{X},\overline{Y},\overline{Z}}\, \{{R}(\overline{X},\overline{Y})\overline{Z}\}=0.$
  \footnote{$\mathfrak{S}_{\overline{X},\overline{Y},\overline{Z}}$
denotes the cyclic sum over ${\overline{X},\overline{Y},\overline{Z}}$.}
    \item[(b)]$\textbf{R}(\overline{X},\overline{Y},\overline{Z},\overline{W})=\textbf{R}(\overline{Z},\overline{W},\overline{X},\overline{Y}).$
  \item[(c)]  $\mathfrak{S}_{\overline{X},\overline{Y},\overline{Z}} \,\{{(\stackrel{h}{\nabla}R)}(\overline{X},\overline{Y},\overline{Z},\overline{W})\}=0.$
  \item[(d)] The horizontal Ricci tensors $Ric$ is symmetric.
  \item[(e)] $\mathfrak{S}_{\overline{U},\overline{V};\,\,\overline{W},\overline{X};\,\,\overline{Y},\overline{Z}}\set{({R}
(\overline{U},\overline{V})\textbf{R})(\overline{W},\overline{X},\overline{Y},\overline{Z})}=0.$
\footnote{$\mathfrak{S}_{\overline{U},\overline{V};\,\,\overline{W},\overline{X};\,\, \overline{Y},\overline{Z}}$
denotes the cyclic sum over the three pairs of arguments
$\overline{U},\overline{V};\,\, \overline{W},\overline{X};\,\,\overline{Y},\overline{Z}$.}
\item[(f)] $\mathfrak{S}_{\overline{U},\overline{V};\,\,\overline{W},\overline{X};\,\,\overline{Y},\overline{Z}}\set{({R}
(\overline{U},\overline{V})\textbf{C})(\overline{W},\overline{X},\overline{Y},\overline{Z})}=0$.
  \item[(g)]$ (\stackrel{h}{\nabla}\stackrel{h}{\nabla}\omega)(\overline{Y},\overline{X},\overline{Z})-
(\stackrel{h}{\nabla}\stackrel{h}{\nabla}\omega)(\overline{X},\overline{Y},\overline{Z})=
( {R}(\overline{X},\overline{Y})\omega)(\overline{Z})$;
$\omega$ is a  $\pi$(1)-form.
\end{description}
\end{lem}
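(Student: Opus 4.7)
The plan is to derive each of the seven identities from the three defining properties of the Cartan connection (metric compatibility, $Q=0$, symmetry of $T$) together with the horizontal integrability hypothesis $\widehat{R}=0$. In the pullback formalism every classical Riemannian identity has a Finslerian counterpart whose ``extra'' terms involve the two torsions and $\widehat{R}$; for Cartan the $h$-torsion vanishes, and $\widehat{R}=0$ forces the Lie bracket $[\beta\overline{X},\beta\overline{Y}]$ to be purely horizontal, so the structure of curvature on horizontal arguments mimics the Riemannian model.

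For (a), I would start from the general first Bianchi identity for a linear connection on a vector bundle, applied to the horizontal lifts $\beta\overline{X},\beta\overline{Y},\beta\overline{Z}$: the cyclic sum of $R(\overline{X},\overline{Y})\overline{Z}$ reduces to a cyclic sum built from $Q$, its covariant derivative, and a term involving $\widehat{R}$ coming from the vertical parts of horizontal brackets. All three pieces vanish under $Q=0$ and $\widehat{R}=0$. Identity (b) then follows by combining (a) with the two skew-symmetries of $\mathbf{R}$: the first is automatic, and the second, $g(R(\overline{X},\overline{Y})\overline{Z},\overline{W})+g(\overline{Z},R(\overline{X},\overline{Y})\overline{W})=0$, comes from $\nabla g=0$; the standard four-cyclic-sum permutation trick from Riemannian geometry then yields pair symmetry. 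Identity (c) is the second Bianchi identity, proved analogously after the $Q$ and $\widehat{R}$ contributions are absorbed. For (d), expressing $\text{Ric}$ as a trace of $R$ and applying the pair symmetry (b) gives the symmetry directly.

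For (e), the approach is Walker's identity: treating $R(\overline{U},\overline{V})$ as a derivation on $\mathbf{R}$ expands each summand into four contributions, one per slot, and the second Bianchi identity (c), together with the antisymmetries and the pair symmetry (b), organizes the twelve resulting terms into groups that cancel. Identity (f) then follows from (e) because $\mathbf{C}=\mathbf{R}-\frac{r}{n(n-1)}\mathbf{G}$: the cyclic sum for $\mathbf{C}$ splits as the (vanishing) cyclic sum for $\mathbf{R}$ plus a scalar multiple of the corresponding sum for $\mathbf{G}$, and since $\mathbf{G}$ is built entirely from $g$, the derivation $R(\overline{U},\overline{V})$ annihilates it via $\nabla g=0$. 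Finally, (g) is the standard Ricci commutation formula: expanding $(\stackrel{h}{\nabla}\stackrel{h}{\nabla}\omega)(\overline{Y},\overline{X},\overline{Z})$ in terms of $\beta\overline{X},\beta\overline{Y}$ acting on the scalar $\omega(\overline{Z})$ produces $\omega(\nabla_{\beta\overline{X}}\nabla_{\beta\overline{Y}}\overline{Z}-\nabla_{\beta\overline{Y}}\nabla_{\beta\overline{X}}\overline{Z}-\nabla_{[\beta\overline{X},\beta\overline{Y}]}\overline{Z})$, and $\widehat{R}=0$ forces $[\beta\overline{X},\beta\overline{Y}]$ to be horizontal, so this collapses to $(R(\overline{X},\overline{Y})\omega)(\overline{Z})$. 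The main obstacle is the bookkeeping in (e): verifying Walker's identity in the Finsler framework requires checking at each step that every torsion or vertical-bracket correction is indeed cancelled by one of the three Cartan axioms or by horizontal integrability.
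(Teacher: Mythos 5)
The paper does not prove this lemma at all: it is quoted verbatim from the authors' earlier work \cite{F1}, where the general Bianchi identities for the Cartan connection (with their $Q$-, $T$- and $\widehat{R}$-correction terms) are established and then specialized under horizontal integrability. Your sketch reconstructs exactly that route --- first Bianchi with the torsion and vertical-bracket terms killed by $Q=0$ and $\widehat{R}=0$, pair symmetry via $\nabla g=0$ plus the usual permutation argument, the second Bianchi identity, Walker's identity for (e), the derivation property $R(\overline{U},\overline{V})\mathbf{G}=0$ for (f), and the Ricci commutation formula with $[\beta\overline{X},\beta\overline{Y}]$ forced horizontal for (g) --- so it is correct in approach and consistent with the cited source; the only thing separating it from a full proof is the explicit bookkeeping of the correction terms, which is precisely what the machinery of \cite{F1} and \cite{r94} supplies.
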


\begin{lem}\label{lem.2} Let $(M,L)$ be a horizontally integrable Finsler manifold and let  $\omega$ be a $\pi$(2)-form. If any one of the following relations holds
\begin{eqnarray*}
  \mathfrak{S}_{\overline{U},\overline{V};\,\overline{W},\overline{X};\,\overline{Y},\overline{Z}}\set{\omega(\overline{U},\overline{V})\textbf{R}(\overline{W},\overline{X},\overline{Y},\overline{Z})
  }&=&0,\\
\mathfrak{S}_{\overline{U},\overline{V};\,\overline{W},\overline{X};\,\overline{Y},\overline{Z}}\set{\omega(\overline{U},\overline{V})\textbf{C}(\overline{W},\overline{X},\overline{Y},\overline{Z})
  }&=&0,\\
  \mathfrak{S}_{\overline{U},\overline{V};\,\overline{W},\overline{X};\,\overline{Y},\overline{Z}}\set{\omega(\overline{U},\overline{V})\textbf{G}(\overline{W},\overline{X},\overline{Y},\overline{Z})
  }&=&0,
\end{eqnarray*}
then $\omega$ vanishes identically.
\end{lem}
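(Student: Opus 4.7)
The strategy for each of the three cases is to contract the given cyclic identity twice with an orthonormal $\pi$-frame $\{\overline{e}_i\}$ and, using the antisymmetry of $\omega$ together with the symmetries of $\textbf{R}$ (and hence of $\textbf{C}$ and $\textbf{G}$), reduce it to a linear equation of the form $\lambda\,\omega(\overline{U},\overline{V})=0$ with $\lambda\neq 0$; this then forces $\omega\equiv 0$.

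I would work out the $\textbf{G}$ case in detail first, as it is the most transparent. Expanding the cyclic sum and substituting $\textbf{G}(\overline{A},\overline{B},\overline{C},\overline{D})=g(\overline{A},\overline{C})g(\overline{B},\overline{D})-g(\overline{B},\overline{C})g(\overline{A},\overline{D})$, a first trace $\overline{W}=\overline{Y}=\overline{e}_i$ (summed over $i$) produces a term $(n-1)g(\overline{X},\overline{Z})\,\omega(\overline{U},\overline{V})$ from the first cyclic summand and four $\omega$-$g$ cross-terms from the other two. A second trace $\overline{X}=\overline{Z}=\overline{e}_j$ (summed over $j$), combined with the antisymmetry $\omega(\overline{V},\overline{U})=-\omega(\overline{U},\overline{V})$, collapses everything into $[n(n-1)+4]\,\omega(\overline{U},\overline{V})=0$, so $\omega\equiv 0$.

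For the $\textbf{R}$ and $\textbf{C}$ cases, I would apply the same double contraction, relying on the pair-symmetry from Lemma \ref{lem.1}(b) to bring all three Riemannian summands into a uniform form, and on the trace formulas $\sum_i\textbf{R}(\overline{e}_i,\overline{X},\overline{e}_i,\overline{Z})=\text{Ric}(\overline{X},\overline{Z})$ and $\sum_{i,j}\textbf{R}(\overline{e}_i,\overline{e}_j,\overline{e}_i,\overline{e}_j)=r$. For the $\textbf{C}$ case the decomposition $\textbf{C}=\textbf{R}-\frac{r}{n(n-1)}\textbf{G}$ lets one reduce everything to combinations of the $\textbf{R}$- and $\textbf{G}$-traces already in hand. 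The key mechanism in both cases is that the antisymmetry of $\omega$ eliminates the frame-averaged cross terms, leaving only a scalar multiple of $\omega(\overline{U},\overline{V})$.

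The main obstacle I anticipate is the post-contraction bookkeeping, especially in the $\textbf{R}$ and $\textbf{C}$ cases: a single frame contraction splits the three cyclic summands into six $\omega$-weighted pieces, each paired with $g$, $\text{Ric}$, or the full $\textbf{R}$, and packaging these into the required $\lambda\,\omega(\overline{U},\overline{V})$ requires repeated use of Lemma \ref{lem.1}(b) and possibly of the first Bianchi identity (Lemma \ref{lem.1}(a)). A secondary subtlety is verifying $\lambda\neq 0$: for $\textbf{G}$ this is automatic, while for $\textbf{R}$ and $\textbf{C}$ it will rely on the dimensional hypothesis $n\geq 3$ together with the implicit non-triviality of $R$ (respectively $C$) built into Definitions \ref{def.1a} and \ref{def.3a}.
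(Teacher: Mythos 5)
First, a point of reference: the paper does not actually prove this lemma --- it is quoted from \cite{F1} --- so your proposal must be judged against the argument used there, which is a Finslerian version of Walker's lemma. Your double-contraction strategy is correct for the $\textbf{G}$ case: the computation ending in $[n(n-1)+4]\,\omega(\overline{U},\overline{V})=0$ checks out, precisely because every cross term there is a contraction of the skew form $\omega$ against the \emph{symmetric} tensor $g$. But this is exactly why the same mechanism breaks down for $\textbf{R}$ and $\textbf{C}$. After the traces $\overline{W}=\overline{Y}=\overline{e}_i$ and $\overline{X}=\overline{Z}=\overline{e}_j$, the second and third cyclic summands combine (via Lemma \ref{lem.1}(b)) into $2\sum_{i,j}\omega(\overline{e}_i,\overline{e}_j)\textbf{R}(\overline{U},\overline{V},\overline{e}_i,\overline{e}_j)$, a full contraction of $\omega$ against $\textbf{R}(\overline{U},\overline{V},\cdot\,,\cdot)$, which is skew --- not symmetric --- in its last two slots. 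Skewness of $\omega$ therefore does not eliminate these terms, and they are not scalar multiples of $\omega(\overline{U},\overline{V})$. What you actually get is an identity of the form $\pm r\,\omega(\overline{U},\overline{V})+2\sum_{i,j}\omega(\overline{e}_i,\overline{e}_j)\textbf{R}(\overline{U},\overline{V},\overline{e}_i,\overline{e}_j)=0$, which says nothing when $r=0$ and in no case forces $\omega=0$. So the $\textbf{R}$ and $\textbf{C}$ cases --- the ones the paper actually uses in Propositions \ref{prop.1}--\ref{prop.3} --- remain unproved; no amount of bookkeeping with Lemma \ref{lem.1}(a),(b) will rescue the trace approach, because the obstruction is structural, not computational.

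The argument that works is purely algebraic, pointwise, and involves no contractions. Index the three pairs by $a,b,c$, write $\omega_a$ for $\omega$ evaluated on the pair $a$ and $B_{ab}$ for $\textbf{R}$ (or $\textbf{C}$, or $\textbf{G}$) evaluated on the pairs $a,b$; horizontal integrability supplies the pair symmetry $B_{ab}=B_{ba}$ (Lemma \ref{lem.1}(b)), and the hypothesis reads $\omega_aB_{bc}+\omega_bB_{ca}+\omega_cB_{ab}=0$. Setting $a=b=c$ gives $\omega_aB_{aa}=0$; setting $c=b$ gives $\omega_aB_{bb}+2\omega_bB_{ab}=0$. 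If some $B_{aa}\neq0$, these two identities give $\omega_b=0$ for every $b$. If all $B_{aa}=0$ but some $B_{ab}\neq0$, the second identity gives $\omega_a=\omega_b=0$, and then the original identity with this fixed $a,b$ and arbitrary third pair $c$ gives $\omega_cB_{ab}=0$, hence $\omega_c=0$. Thus at each point either $\omega=0$ or the curvature tensor in question vanishes, and the latter is excluded by the standing non-vanishing assumptions of Definitions \ref{def.1a}--\ref{def.3a} (and automatically for $\textbf{G}$). This handles all three cases uniformly, which is the real advantage over a case-by-case trace computation.
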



\Section{ Recurrenc (2-recurrence) }

\begin{prop} \label{prop.1} Let $(M,L)$ be a horizontally integrable Finsler manifold of dimension $n\geq3$.
If $(M,L)$ is recurrent (resp. 2-recurrent) with
recurrence form $A$ ( resp. $\alpha$), then we have:
  \begin{description}
  \item[(a)] $\mathfrak{S}_{\overline{X},\overline{Y},\overline{Z}} \,\{{(A\otimes R)}(\overline{X},\overline{Y},\overline{Z},\overline{W})\}=0$
  \item[(b)]  $\stackrel{h}{\nabla} A$ \emph{(}resp. $\alpha$\emph{)} is symmetric,
  \item[(c)]  $R(\overline{X},\overline{Y}){\bf R}=0$.
        \end{description}
  \end{prop}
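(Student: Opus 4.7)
\emph{Proof plan.} My plan is to obtain (a) first as a direct substitution into the second Bianchi identity of Lemma~\ref{lem.1}(c), and then to deduce (b) and (c) jointly by combining a commutator computation with Lemma~\ref{lem.1}(e) and the nonvanishing criterion Lemma~\ref{lem.2}. The recurrent and 2-recurrent cases follow the same scheme.

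For (a) in the recurrent case, substituting $\stackrel{h}{\nabla} R = A\otimes R$ into the cyclic identity $\mathfrak{S}_{\overline{X},\overline{Y},\overline{Z}}\{(\stackrel{h}{\nabla} R)(\overline{X},\overline{Y},\overline{Z},\overline{W})\} = 0$ of Lemma~\ref{lem.1}(c) immediately yields the cyclic vanishing of $A\otimes R$. For the 2-recurrent case I would first apply $\stackrel{h}{\nabla}_{\overline{U}}$ to the same Bianchi identity and then substitute $\stackrel{h}{\nabla}\stackrel{h}{\nabla} R = \alpha\otimes R$ to produce the analogous cyclic identity with $\alpha$ in place of $A$.

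For (b) and (c) in the recurrent case, I differentiate the recurrence relation once more to obtain
\[
\stackrel{h}{\nabla}\stackrel{h}{\nabla} R = (\stackrel{h}{\nabla} A)\otimes R + A\otimes A\otimes R.
\]
Antisymmetrizing in the first two slots kills the symmetric $A\otimes A$ term and gives
\[
(\stackrel{h}{\nabla}\stackrel{h}{\nabla} R)(\overline{X},\overline{Y},\cdot) - (\stackrel{h}{\nabla}\stackrel{h}{\nabla} R)(\overline{Y},\overline{X},\cdot) = \omega(\overline{X},\overline{Y})\, R,
\]
where $\omega(\overline{X},\overline{Y}) := (\stackrel{h}{\nabla} A)(\overline{X},\overline{Y}) - (\stackrel{h}{\nabla} A)(\overline{Y},\overline{X})$. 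On the other hand, the Ricci identity of Lemma~\ref{lem.1}(g), extended from $\pi$(1)-forms to the $(1,3)$-tensor $R$ in the standard way, identifies the left-hand side with the natural action of $R(\overline{X},\overline{Y})$ on $R$ (up to sign). Passing to the fully covariant form by pairing with $g(\,\cdot\,,\overline{V})$ and invoking Lemma~\ref{lem.1}(e), the cyclic sum over the three pairs of arguments of this action vanishes, so
\[
\mathfrak{S}_{\overline{U},\overline{V};\,\overline{W},\overline{X};\,\overline{Y},\overline{Z}}\{\omega(\overline{U},\overline{V})\,\mathbf{R}(\overline{W},\overline{X},\overline{Y},\overline{Z})\} = 0.
\]
Lemma~\ref{lem.2} then forces $\omega \equiv 0$, which is exactly the symmetry of $\stackrel{h}{\nabla} A$ claimed in (b). Substituting $\omega = 0$ back into the commutator equation gives $R(\overline{X},\overline{Y})\,R = 0$, hence $R(\overline{X},\overline{Y})\,\mathbf{R} = 0$, proving (c). The 2-recurrent case is formally identical: the hypothesis $\stackrel{h}{\nabla}\stackrel{h}{\nabla} R = \alpha\otimes R$ already yields the commutator identity with the skew part of $\alpha$ playing the role of $\omega$, and the same Lemma~\ref{lem.2} argument gives the symmetry of $\alpha$ together with $R(\overline{X},\overline{Y})\,\mathbf{R} = 0$.

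The main obstacle I anticipate is the tensorial form of the Ricci commutator identity: Lemma~\ref{lem.1}(g) is stated only for $\pi$(1)-forms, so the extension to the $(1,3)$-tensor $R$ (where covariant derivatives commute up to a derivation $R(\overline{X},\overline{Y})$ acting on each of the four slots) must be spelled out with careful sign conventions so that it matches the form of $R(\overline{X},\overline{Y})\,\mathbf{R}$ used in Lemma~\ref{lem.1}(e). Once this is in hand, everything reduces to tensor algebra plus two routine invocations of Lemma~\ref{lem.2}.
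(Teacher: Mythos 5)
Your proposal is correct and uses exactly the ingredients the paper cites for this result (Definition \ref{def.1a}, the Bianchi and commutation identities of Lemma \ref{lem.1}, and the nonvanishing criterion Lemma \ref{lem.2}); the paper's own proof is a one-line citation of these, and the detailed argument it intends is precisely the one you spell out, as can be seen from the analogous worked-out proof of Proposition \ref{prop.3}. The only point to be careful about --- the extension of the Ricci commutation formula of Lemma \ref{lem.1}(g) from $\pi$(1)-forms to the $(0,4)$-tensor $\mathbf{R}$ with consistent signs --- is one you correctly flag, and it is handled implicitly in the paper in the same way.
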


\begin{proof}
The proof follows from  Definition \ref{def.1a} together with Lemmas \ref{lem.1} and \ref{lem.2}.
\end{proof}

\begin{thm} \label{thm.1} If $(M,L)$ is a recurrent Finsler manifold of dimension $n\geq3$ with
recurrence form $A$, then
\begin{description}
  \item[(a)] $(M,L)$ is $RF_{n} $.
  \item[(b)] $(M,L)$ is $ CF_{n} $ provided that $r\neq0$.
  \item[(c)] $(M,L)$ is $ 2F_{n} $ \emph{(} resp. $2RF_{n}$\emph{)} provided that $\stackrel{h}{\nabla}A+A\otimes A\neq0$.
   \end{description}
\end{thm}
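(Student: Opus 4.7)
My plan is to derive each of the three conclusions directly from the recurrence hypothesis $\stackrel{h}{\nabla} R = A\otimes R$ by routine tensorial manipulations, using only two standard features of the Cartan connection: it is metric, so $\stackrel{h}{\nabla} g = 0$ (hence also $\stackrel{h}{\nabla} G = 0$, since $G$ is built from $g$ alone), and $h$-covariant differentiation commutes with every tensorial contraction.

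For part (a), I would simply take the Ricci trace of the hypothesis; since this contraction commutes with $\stackrel{h}{\nabla}$, one obtains at once $\stackrel{h}{\nabla}\text{Ric} = A\otimes\text{Ric}$, which is the $RF_n$ condition.

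For part (b), I would first perform one further contraction of (a) with $g^{-1}$ (valid by metricity) to turn $\stackrel{h}{\nabla}\text{Ric} = A\otimes\text{Ric}$ into the scalar identity $\stackrel{h}{\nabla} r = r\,A$. Then, expanding $C = R - \frac{r}{n(n-1)} G$ and using $\stackrel{h}{\nabla} G = 0$, I compute
$$\stackrel{h}{\nabla} C \;=\; \stackrel{h}{\nabla} R - \frac{\stackrel{h}{\nabla} r}{n(n-1)}\otimes G \;=\; A\otimes R - \frac{r}{n(n-1)}\,A\otimes G \;=\; A\otimes C,$$
which is the $CF_n$ condition; the side hypothesis $r\neq 0$ serves to keep $C$ a genuinely distinct tensor from $R$, so that the resulting recurrence is an honest instance of concircular recurrence (in particular, that $C\neq 0$ as required by Definition \ref{def.3a}).

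For part (c), differentiating the hypothesis once more and applying Leibniz yields
$$\stackrel{h}{\nabla}\stackrel{h}{\nabla} R \;=\; (\stackrel{h}{\nabla} A)\otimes R + A\otimes\stackrel{h}{\nabla} R \;=\; \bigl(\stackrel{h}{\nabla} A + A\otimes A\bigr)\otimes R,$$
so with $\alpha := \stackrel{h}{\nabla} A + A\otimes A$ (nonzero by hypothesis), $(M,L)$ is $2F_n$; the identical computation starting from the relation derived in (a) gives $\stackrel{h}{\nabla}\stackrel{h}{\nabla}\text{Ric} = \alpha\otimes\text{Ric}$, i.e.\ $2RF_n$. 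I do not anticipate a substantive obstacle: every step is a one-line Leibniz or trace identity. The only bookkeeping is to certify the non-vanishing of $\text{Ric}$, $C$, and $\alpha$ demanded by the formal definitions of $RF_n$, $CF_n$, $2F_n$, and $2RF_n$, and the explicit side conditions $r\neq 0$ and $\stackrel{h}{\nabla} A + A\otimes A\neq 0$ are designed precisely for this.
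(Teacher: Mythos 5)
Your proposal is correct and follows essentially the same route as the paper: part (a) by tracing the recurrence relation, part (b) by deriving $\stackrel{h}{\nabla} r = rA$ and expanding $C = R - \frac{r}{n(n-1)}G$ using $\stackrel{h}{\nabla}G = 0$, and part (c) by a second $h$-covariant differentiation and the Leibniz rule, with $\alpha = \stackrel{h}{\nabla}A + A\otimes A$. The only quibble is your gloss on the role of $r\neq 0$ in (b): the identity $\stackrel{h}{\nabla}C = A\otimes C$ is derived the same way whether or not $r$ vanishes, and $r\neq 0$ does not by itself certify $C\neq 0$; the paper likewise simply carries this hypothesis without further justification.
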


\begin{proof} ~\par

\vspace{4pt}
 \noindent\textbf{(a)} is clear from the definitions of recurrence and Ricci recurrence.

 \vspace{4pt}

 \noindent\textbf{(b)}  As $(M,L)$ is a recurrent Finsler manifold,
 $ \stackrel{h}{\nabla} {R}= A\otimes {R}$,    with $A\neq 0 $.
Hence,
 $\stackrel{h}{\nabla} {r}= rA $,  with ${r\neq0}$ by assumption. Consequently,
     \begin{eqnarray}
        \stackrel{h}{\nabla} {C}&=& \stackrel{h}{\nabla} \set{{R}-\frac{r}{n(n-1)}\, {G}}
         =\,\, \stackrel{h}{\nabla}{R}-\frac{\stackrel{h}{\nabla}r}{n(n-1)}\otimes G,  {\quad\text{since} \stackrel{h}{\nabla}{G}=0}   \nonumber \\
         &{=}& A\otimes {R}-\frac{rA}{n(n-1)}\otimes  G
        = A\otimes C  \nonumber .
      \end{eqnarray}

\vspace{4pt}
 \noindent\textbf{(c)}  Using $\stackrel{h}{\nabla} {R}= A\otimes {R}$, we have
    \begin{equation*}\label{e1}
  \stackrel{h}{\nabla} \stackrel{h}{\nabla} {R}= \stackrel{h}{\nabla}A\otimes {R}+A\otimes \stackrel{h}{\nabla}{R}
        =(\stackrel{h}{\nabla}A +A\otimes A)\otimes {R}  = \alpha \otimes {R},
        \end{equation*}
   where $\alpha:=\stackrel{h}{\nabla}A  +A\otimes A$.  Hence, $(M,L)$ is  $2F_{n}$ provided that $\alpha\neq0$.

   Similarly, one can show that $(M,L)$ is  $2RF_{n}$.
 \end{proof}

\begin{rem}
One can easily show that, the sufficient condition for a  Ricci recurrent Finsler manifold of dimension $n\geq3$  with
recurrence form $A$ to be a 2-Ricci recurrent Finsler manifold is that $\stackrel{h}{\nabla}A  +A\otimes A \neq0 $.
\end{rem}


\Section{Concircular recurrence (2-recurrence)}

\begin{prop} \label{prop.2} Let $(M,L)$ be a horizontally integrable concircularly  recurrent (resp. 2-concircularly recurrent) Finsler manifold of dimension $n\geq3$ with recurrence form $A$ (resp. $\alpha$), then we have:
\begin{description}
  \item[(a)]  $\stackrel{h}{\nabla} A$ \emph{(}resp. $\alpha$\emph{)} is symmetric,
  \item[(b)] $R(\overline{X},\overline{Y}){\bf C}=0$,
  \item[(c)] $R(\overline{X},\overline{Y}){\bf R}=0$.
   \end{description}
   \end{prop}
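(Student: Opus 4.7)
The plan is to imitate the strategy of Proposition~\ref{prop.1} but to substitute the concircular curvature $C$ for the horizontal curvature $R$. The three key ingredients are: the Ricci identity (Lemma~\ref{lem.1}(g), which I will apply to $C$ regarded as a higher-rank tensor, not just to a $1$-form); the cyclic identity for the action of $R$ on $\textbf{C}$ (Lemma~\ref{lem.1}(f)); and the vanishing criterion for $\pi$(2)-forms via cyclic sums against $\textbf{C}$ (Lemma~\ref{lem.2}).

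For the concircularly recurrent case I would start from $\stackrel{h}{\nabla} C = A\otimes C$ and differentiate once more to obtain
$$\stackrel{h}{\nabla}\stackrel{h}{\nabla} C \;=\; \stackrel{h}{\nabla}A \otimes C + A\otimes A \otimes C.$$
Evaluating on $\overline{X},\overline{Y}$ in the two outer slots and subtracting, the $A\otimes A$ contribution cancels (it is symmetric in $\overline{X},\overline{Y}$); the Ricci identity, which for a horizontally integrable manifold extends directly from Lemma~\ref{lem.1}(g) to tensors of arbitrary rank, then yields
$$\bigl[(\stackrel{h}{\nabla}A)(\overline{Y},\overline{X}) - (\stackrel{h}{\nabla}A)(\overline{X},\overline{Y})\bigr]\,C \;=\; R(\overline{X},\overline{Y})\,C.$$
Setting $\omega(\overline{X},\overline{Y}) := (\stackrel{h}{\nabla}A)(\overline{X},\overline{Y}) - (\stackrel{h}{\nabla}A)(\overline{Y},\overline{X})$, passing to the $(0,4)$-form $\textbf{C}$ by pairing with $g(\cdot,\overline{W})$, and taking the cyclic sum over the three pairs of arguments, Lemma~\ref{lem.1}(f) makes the right-hand side vanish. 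Lemma~\ref{lem.2} then forces $\omega \equiv 0$, which simultaneously yields (a) (symmetry of $\stackrel{h}{\nabla}A$) and, plugged back into the displayed identity, (b).

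The 2-concircularly recurrent case is even more direct: plug $\stackrel{h}{\nabla}\stackrel{h}{\nabla} C = \alpha \otimes C$ straight into the tensor Ricci identity to get $[\alpha(\overline{Y},\overline{X}) - \alpha(\overline{X},\overline{Y})]\,C = R(\overline{X},\overline{Y})\,C$, then rerun the cyclic sum/Lemma~\ref{lem.2} argument with $\omega$ now the antisymmetric part of $\alpha$, obtaining (a) and (b) at once.

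Part (c) is a clean corollary of (b): from $\textbf{C} = \textbf{R} - \frac{r}{n(n-1)}\textbf{G}$ and the metric property $\stackrel{h}{\nabla} g = 0$ of the Cartan connection (which forces $R(\overline{X},\overline{Y})\textbf{G} = 0$ since $R(\overline{X},\overline{Y})$ acts as a derivation that kills $g$), the identity $R(\overline{X},\overline{Y})\textbf{C} = 0$ collapses to $R(\overline{X},\overline{Y})\textbf{R} = 0$. The only real obstacle I anticipate is careful bookkeeping in the first step: verifying that the Ricci identity of Lemma~\ref{lem.1}(g), stated there for $1$-forms, really does transfer to $C$ as a $(1,3)$-tensor without hidden torsion or bracket contributions, which is precisely what horizontal integrability (together with the vanishing of the $(h)h$-torsion of Cartan) guarantees.
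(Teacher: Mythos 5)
Your proposal is correct and follows exactly the route the paper intends: the paper's own proof of Proposition \ref{prop.2} is only the one-line remark that it ``follows from Definition \ref{def.3a} together with Lemmas \ref{lem.1} and \ref{lem.2}, after some calculations,'' and the calculation you supply (differentiate $\stackrel{h}{\nabla}C=A\otimes C$, apply the Ricci identity extended to $\textbf{C}$, kill the right-hand side with Lemma \ref{lem.1}(f), and invoke Lemma \ref{lem.2} to get $\overline{d}A=0$) is precisely the argument the authors write out explicitly for the analogous generalized case in Proposition \ref{prop.3}. Your derivation of (c) from (b) via $R(\overline{X},\overline{Y})\textbf{G}=0$ (a consequence of $\stackrel{h}{\nabla}g=0$ and the derivation property) is a clean and valid way to fill in the step the paper leaves implicit.
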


\begin{proof}
The proof follows from  Definition \ref{def.3a} together with Lemmas \ref{lem.1} and  \ref{lem.2}, after some calculations.
\end{proof}

\begin{thm}\label{thm.2} If $(M,L)$ is a  concircularly  recurrent Finsler manifold of dimension $n\geq3$ with
recurrence form $A$, then
\begin{description}
  \item[(a)]  $(M,L)$ is $ 2CF_{n} $ provided that $\stackrel{h}{\nabla}A+A\otimes A\neq0$.
  \item[(b)]  $(M,L)$ is $ GF_{n} $ \emph{(}resp. $ GRF_{n}$\emph{)} provided that $\stackrel{h}{\nabla}r-rA\neq0$.
  \item[(c)]  $(M,L)$ is $ F_{n} $ provided that $\widehat{R}=0$.
 \end{description}
\end{thm}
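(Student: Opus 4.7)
The plan is to treat the three parts independently. Parts (a) and (b) are direct differential manipulations of the defining identity $\stackrel{h}{\nabla} C = A \otimes C$ together with the metric property $\stackrel{h}{\nabla} G = 0$ (a consequence of $\nabla g = 0$ for the Cartan connection). Part (c) is the more delicate step, where the hypothesis $\widehat{R} = 0$ must be exploited by probing the tensor identity at the fundamental $\pi$-vector field $\overline{\eta}$.

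For (a), I would mirror the argument of Theorem \ref{thm.1}(c): apply $\stackrel{h}{\nabla}$ a second time to $\stackrel{h}{\nabla} C = A \otimes C$, invoke the Leibniz rule, and obtain $\stackrel{h}{\nabla}\stackrel{h}{\nabla} C = (\stackrel{h}{\nabla} A + A \otimes A) \otimes C$. Setting $\alpha := \stackrel{h}{\nabla} A + A \otimes A$, the hypothesis $\alpha \neq 0$ immediately yields $2CF_{n}$.

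For (b), I would substitute $C = R - \tfrac{r}{n(n-1)}\, G$ into the concircular recurrence relation, use $\stackrel{h}{\nabla} G = 0$, and rearrange to obtain
\[
\stackrel{h}{\nabla} R \;=\; A \otimes R + B \otimes G, \qquad B := \tfrac{\stackrel{h}{\nabla} r - rA}{n(n-1)}.
\]
Under $\stackrel{h}{\nabla} r - rA \neq 0$, so that $B \neq 0$, this is precisely the $GF_{n}$ identity. The $GRF_{n}$ claim follows by contracting in the appropriate slot: $R$ contracts to $\operatorname{Ric}$, while $G(\overline{X},\overline{Y})\overline{Z} = g(\overline{X},\overline{Z})\overline{Y} - g(\overline{Y},\overline{Z})\overline{X}$ contracts to $-(n-1)\,g$, giving $\stackrel{h}{\nabla}\operatorname{Ric} = A \otimes \operatorname{Ric} + \widetilde{B} \otimes g$ with $\widetilde{B} = -(n-1)B$ proportional to $B$.

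For (c), I would start from the $GF_{n}$ relation established in (b) and evaluate the last argument at $\overline{\eta}$. Because the Cartan connection satisfies $\stackrel{h}{\nabla} \overline{\eta} = 0$, the left-hand side reduces to a component of $\stackrel{h}{\nabla} \widehat{R}$, yielding
\[
(\stackrel{h}{\nabla}_{V} \widehat{R})(\overline{X}, \overline{Y}) \;=\; A(V)\, \widehat{R}(\overline{X},\overline{Y}) \;+\; B(V)\bigl[g(\overline{X}, \overline{\eta})\overline{Y} - g(\overline{Y}, \overline{\eta})\overline{X}\bigr].
\]
The assumption $\widehat{R} = 0$ annihilates both the left-hand side and the first term on the right, leaving
\[
B(V)\bigl[g(\overline{X}, \overline{\eta})\overline{Y} - g(\overline{Y}, \overline{\eta})\overline{X}\bigr] = 0 \qquad \text{for all } V,\overline{X},\overline{Y}.
\]
The main obstacle, and essentially the only real work in the theorem, is to deduce $B \equiv 0$ from this. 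The key step is to set $\overline{X} = \overline{\eta}$ and take any $\overline{Y}$ not parallel to $\overline{\eta}$ (available since $n \geq 3$); the bracket then becomes $L^{2}\overline{Y} - g(\overline{Y},\overline{\eta})\overline{\eta}$, which is nonzero because $\overline{Y}$ is linearly independent of $\overline{\eta}$. This forces $B(V) = 0$ for every $V$, so the $G$-term in the $GF_{n}$ identity vanishes, leaving $\stackrel{h}{\nabla} R = A \otimes R$, which says that $(M,L)$ is $F_{n}$.
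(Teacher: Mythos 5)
Your parts (a) and (b) reproduce the paper's proof essentially verbatim: differentiate $\stackrel{h}{\nabla}C=A\otimes C$ once more for (a), and substitute $C=R-\frac{r}{n(n-1)}G$ with $\stackrel{h}{\nabla}G=0$ to get $\stackrel{h}{\nabla}R=A\otimes R+B\otimes G$, $B=\frac{1}{n(n-1)}(\stackrel{h}{\nabla}r-rA)$, then contract for the Ricci statement (your factor $-(n-1)$ versus the paper's $+(n-1)$ is only a choice of which slot is traced and is immaterial). The real divergence is in (c): the paper does not prove it at all but simply cites Theorem C of \cite{F1}, whereas you give a direct, self-contained argument — evaluate the identity $\stackrel{h}{\nabla}R=A\otimes R+B\otimes G$ at $\overline{Z}=\overline{\eta}$, use $\nabla_{\beta V}\overline{\eta}=K(\beta V)=0$ to convert the left side into $\stackrel{h}{\nabla}\widehat{R}$, and then use $\widehat{R}=0$ together with the non-vanishing of $L^{2}\overline{Y}-g(\overline{Y},\overline{\eta})\overline{\eta}$ for $\overline{Y}$ independent of $\overline{\eta}$ to force $B=0$. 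This argument is correct (one could add the small observation that $R\neq0$ here, since $R=0$ would give $r=0$ and hence $C=0$, contradicting the standing hypothesis $C\neq0$ in the definition of $CF_n$; and that $A\neq 0$ is inherited from the concircular recurrence), and it has the advantage of making the paper's appeal to an external reference unnecessary; what the citation buys the authors is brevity, while your route makes the role of the hypothesis $\widehat{R}=0$ completely explicit.
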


\begin{proof} ~\par

\vspace{4pt}
 \noindent\textbf{(a)}  Let $(M,L)$ be concircularly recurrent, then $\stackrel{h}{\nabla} {C}= A\otimes {C}$, {with} $ A\neq 0 $.
 Consequently,
      \begin{equation*}
             \stackrel{h}{\nabla} \stackrel{h}{\nabla} {C}= \stackrel{h}{\nabla}A\otimes {C}+A\otimes \stackrel{h}{\nabla}{C}
              = (\stackrel{h}{\nabla}A +A\otimes A)\otimes {C}
        = \alpha \otimes {C},
             \end{equation*}
where $\alpha:=\stackrel{h}{\nabla}A+A\otimes A$.  Hence, $(M,L)$ is a $2CF_{n}$ if $\alpha\neq0$.

 \vspace{4pt}

 \noindent\textbf{(b)} As $\stackrel{h}{\nabla} {C}= A\otimes {C}$, ${C}= {R}-\frac{r}{n(n-1)}\, {G}$  and $\stackrel{h}{\nabla} {G}=0$,
 we get
      \begin{eqnarray}
       \stackrel{h}{\nabla}{R}&{=}&A\otimes {R}+\frac{1}{n(n-1)}\{\stackrel{h}{\nabla}r-rA\}\otimes {G}   \label{eq.3}\\
        \stackrel{h}{\nabla}{R}&{=}& A\otimes {R}+B\otimes {G}  \nonumber ,
      \end{eqnarray}
where  $B=\frac{1}{n(n-1)}(\stackrel{h}{\nabla}{r}-rA)$. Since $A\neq0$, then, $(M,L)$ is $GF_{n}$ if $B\neq0$.

\par

Now, taking the trace of both sides of (\ref{eq.3}), one gets
$$\stackrel{h}{\nabla}{Ric}= A\otimes {Ric}+B_{1}\otimes g, $$
 where $B_{1}=\frac{1}{n}\{\stackrel{h}{\nabla}r-rA\}$. Hence, $(M,L)$ is $GRF_{n}$ if $B_{1}\neq0$.

\vspace{4pt}

 \noindent\textbf{(c)} Follows from Theorem C of \cite{F1}.
\end{proof}


\Section{Generalized recurrence (2-recurrence)}

\begin{thm}\label{thm.3} If $(M,L)$ is a generalized recurrent Finsler manifold of dimension $n\geq3$ with
recurrence forms $A $ and $B$, then
\begin{description}
\item[(a)]  $(M,L)$ is $G(2F_{n}) $ provided that $\stackrel{h}{\nabla}A +A\otimes A\neq0$ and $\stackrel{h}{\nabla}B +A\otimes B\neq0$.
\item[(b)]  $(M,L)$ is $CF_{n}$ provided that $r\neq0$.
\item[(c)]  $(M,L)$ is $2CF_{n} $ provided that $\stackrel{h}{\nabla}A +A\otimes A\neq0$ and $r\neq0$.
\item[(d)]  $(M,L)$ is $GRF_{n} $
\item[(e)]  $(M,L)$ is $ F_{n} $ provided that $\widehat{R}=0 $ and $r\neq0$.
 \end{description}
\end{thm}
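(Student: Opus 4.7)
The plan is to derive each conclusion directly from the defining relation $\stackrel{h}{\nabla} R = A\otimes R + B\otimes G$ by applying the Leibniz rule together with the identities $\stackrel{h}{\nabla} g = 0$ and, consequently, $\stackrel{h}{\nabla} G = 0$. The only auxiliary input needed is the scalar-curvature trace identity $\stackrel{h}{\nabla} r = rA + n(n-1)\,B$, obtained by contracting the generalized recurrence relation twice (first to $\text{Ric}$, then to $r$) and using that the relevant trace of $G$ is a scalar multiple of $g$; this parallels the computation used in Theorem \ref{thm.2}(b) and serves as the engine for parts (b)--(d).

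For part (a), I would apply $\stackrel{h}{\nabla}$ to both sides of the defining relation and re-substitute the relation itself on the right. This yields
\begin{equation*}
\stackrel{h}{\nabla}\stackrel{h}{\nabla} R \;=\; (\stackrel{h}{\nabla} A + A\otimes A)\otimes R \;+\; (\stackrel{h}{\nabla} B + A\otimes B)\otimes G.
\end{equation*}
With $\alpha := \stackrel{h}{\nabla} A + A\otimes A$ and $\mu := \stackrel{h}{\nabla} B + A\otimes B$, the two non-vanishing hypotheses deliver $G(2F_{n})$ at once.

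For parts (b)--(d), I would use the trace identity above. For (b):
\begin{equation*}
\stackrel{h}{\nabla} C \;=\; \stackrel{h}{\nabla} R - \frac{\stackrel{h}{\nabla} r}{n(n-1)}\otimes G \;=\; A\otimes R + B\otimes G - \frac{rA + n(n-1)B}{n(n-1)}\otimes G;
\end{equation*}
the $B\otimes G$ contributions cancel and the remainder regroups as $A\otimes C$, so that $(M,L)$ is $CF_{n}$ under the standing condition $r\neq 0$. Part (c) is then obtained by feeding the conclusion of (b) into Theorem \ref{thm.2}(a) with the same recurrence form $A$. For part (d), only the first-level contraction is needed: $\stackrel{h}{\nabla}\text{Ric} = A\otimes\text{Ric} + B_{1}\otimes g$, where $B_{1}$ is a non-zero scalar multiple of $B$, matching Definition \ref{def.2a}(c).

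Finally, for (e), the hypotheses $\widehat{R}=0$ and $r\neq 0$ place the situation squarely within the scope of Theorem C of \cite{F1}, which I would cite directly to conclude that the generalized recurrence degenerates to $B=0$, so that $(M,L)$ becomes a simple recurrent space $F_{n}$. The one place that requires real care is the trace identity behind (b): the constant $n(n-1)$ must come out with the correct sign, because the cancellation of the $G$-term relies on it matching precisely the normalization in $C = R - \frac{r}{n(n-1)}G$; once that identity is secured, the remaining assertions follow either by straightforward differentiation (a), by contraction (d), or by direct appeal to an earlier result (c) and (e).
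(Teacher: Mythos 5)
Your proposal is correct and follows essentially the same route as the paper: differentiate and re-substitute for (a), derive the double-contraction identity $\stackrel{h}{\nabla}r = rA + n(n-1)B$ to cancel the $G$-term and obtain $\stackrel{h}{\nabla}C = A\otimes C$ for (b), contract once for (d), and chain (b) with Theorem \ref{thm.2} (ultimately Theorem C of \cite{F1}) for (c) and (e). The paper's proof is merely terser on (c), (d) and (e), dismissing them as consequences of (b) and earlier results exactly as you do.
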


\begin{proof} ~\par

\vspace{4pt}
 \noindent\textbf{(a)}  Let $(M,L)$ be a generalized recurrent Finsler manifold, then
  $ \stackrel{h}{\nabla} {R}= A\otimes {R}+B\otimes {G}$,    with $A\neq 0 \neq B $.
   Consequently,
      \begin{eqnarray}
        \stackrel{h}{\nabla} \stackrel{h}{\nabla} {R}&{=}& (\stackrel{h}{\nabla}A\otimes {R}+A\otimes \stackrel{h}{\nabla}{R})
       +\stackrel{h}{\nabla}B\otimes {G},  \,\, \text{since} \stackrel{h}{\nabla}G=0 \nonumber \\
        &{=}& (\stackrel{h}{\nabla}A\otimes {R}
         +A\otimes A\otimes {R}+A\otimes B\otimes {G})
       +\stackrel{h}{\nabla}B\otimes {G}   \nonumber \\
       &{=}& (\stackrel{h}{\nabla}A +A\otimes A)\otimes {R}+(A\otimes B +\stackrel{h}{\nabla}B)\otimes {G}  \label{eq.aa} \\
          &{=}& \alpha\otimes {R}+\mu\otimes {G}   \nonumber ,
      \end{eqnarray}
where $\alpha:=\stackrel{h}{\nabla}A +A\otimes A$ and $\mu:=\stackrel{h}{\nabla}B+A\otimes B$ are non-zero scalar 2-forms.

\vspace{4pt}
 \noindent\textbf{(b)} By double contraction of \,$ \stackrel{h}{\nabla} {R}= A\otimes {R}+B\otimes {G}$,\, we get
 \begin{eqnarray}\label{eq.con}
      \stackrel{h}{\nabla} {r}&=& rA + n(n-1)B .
      \end{eqnarray}
 Consequently,
      \begin{eqnarray}
        \stackrel{h}{\nabla} {C}&=& \stackrel{h}{\nabla} \set{{R}-\frac{r}{n(n-1)}\, {G}}=\,\,\stackrel{h}{\nabla}{R}-\frac{\stackrel{h}{\nabla}r}{n(n-1)}\otimes G  \nonumber \\
         &\stackrel{(\ref{eq.con})}{=}& A\otimes {R}-\frac{rA}{n(n-1)}\otimes  G  = A\otimes C  \label{eq.con1} .
      \end{eqnarray}

 \vspace{4pt}

\noindent\textbf{(c)} follows from \textbf{(b)},\,\, \textbf{(d)} is trivial,\,\,  \textbf{(e)} follows from \textbf{(b)} and Theorem \ref{thm.2}.
\end{proof}

\begin{rem} One can easily show that, the sufficient condition for a generalized Ricci recurrent Finsler manifold of dimension $n\geq3$,  with
recurrence forms $A$ and $B$, to be a generalized 2-Ricci recurrent Finsler manifold is that $\stackrel{h}{\nabla}A +A\otimes A\neq0$ and $\stackrel{h}{\nabla}B +A\otimes B\neq0$.
\end{rem}

\begin{prop} \label{prop.3} Let $(M,L)$ be a horizontally integrable generalized recurrent Finsler manifold of dimension $n\geq3$ with
recurrence forms $A, B$ and scalar curvature $r$, then we have:
  \begin{description}
  \item[(a)] $\mathfrak{S}_{\overline{X},\overline{Y},\overline{Z}} \,\{{(A\otimes R+B\otimes G)}(\overline{X},\overline{Y},\overline{Z},\overline{W})\}=0$.
  \item[(b)] $\overline{d}A$ \,and \,\,$\overline{d}B+A\wedge B$ vanish identically.
  \item[(c)] $R(\overline{X},\overline{Y}){\bf R}=0$,\\
   where $\overline{d}A(\overline{X},\overline{Y}):=(\stackrel{h}{\nabla}{A})(\overline{X},\overline{Y})
        -(\stackrel{h}{\nabla}{A})(\overline{Y},\overline{X})$.
\end{description}
     \end{prop}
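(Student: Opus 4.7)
The plan is to follow the pattern of Proposition \ref{prop.1} (simple recurrence), carrying the extra $B\otimes G$ term through every step.

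Part (a) is immediate: apply the second Bianchi identity Lemma \ref{lem.1}(c) to the generalized recurrence relation $\stackrel{h}{\nabla}R = A\otimes R + B\otimes G$. Substituting the right-hand side into $\mathfrak{S}_{\overline{X},\overline{Y},\overline{Z}}\{(\stackrel{h}{\nabla}R)(\overline{X},\overline{Y},\overline{Z},\overline{W})\}=0$ gives the stated identity directly.

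For part (b), I would start from the second-derivative expression already computed in the proof of Theorem \ref{thm.3}(a), namely $\stackrel{h}{\nabla}\stackrel{h}{\nabla}R = (\stackrel{h}{\nabla}A + A\otimes A)\otimes R + (\stackrel{h}{\nabla}B + A\otimes B)\otimes G$, and antisymmetrize in the first two arguments $(\overline{U},\overline{V})$. By the Ricci identity on a horizontally integrable Finsler manifold (the tensor analogue of Lemma \ref{lem.1}(g)), the left-hand side becomes $R(\overline{U},\overline{V})\cdot R$; the symmetric factor $A\otimes A$ drops out, so
\[
R(\overline{U},\overline{V})\cdot R \;=\; \overline{d}A(\overline{U},\overline{V})\,R \;+\; (\overline{d}B + A\wedge B)(\overline{U},\overline{V})\,G.
\]
Lowering an index with the parallel metric and applying Lemma \ref{lem.1}(e) to kill the three-pair cyclic sum of the left-hand side yields one combined identity,
\[
\mathfrak{S}\{\overline{d}A\cdot \mathbf{R}\} \;+\; \mathfrak{S}\{(\overline{d}B+A\wedge B)\cdot \mathbf{G}\} \;=\; 0.
\]
To decouple, I would trace with $g^{-1}$ across a suitable pair of non-paired slots: $\mathbf{G}$-traces collapse to scalar multiples of $g$ (so the $\mathbf{G}$-term becomes a Kronecker-type quantity), while $\mathbf{R}$-traces produce $\mathrm{Ric}$; using the symmetry of $\mathrm{Ric}$ from Lemma \ref{lem.1}(d) together with the trace of part (a) lets me eliminate the $\mathrm{Ric}$-contribution and isolate the two cyclic sums individually. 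Lemma \ref{lem.2} applied in its $\mathbf{R}$-form to $\overline{d}A$, and in its $\mathbf{G}$-form to $\overline{d}B+A\wedge B$, then forces both to vanish.

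Part (c) follows instantly: inserting $\overline{d}A=0$ and $\overline{d}B+A\wedge B=0$ into the displayed formula for $R(\overline{U},\overline{V})\cdot\mathbf{R}$ gives $R(\overline{U},\overline{V})\cdot\mathbf{R}=0$.

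The main obstacle is the decoupling in (b). Because $\mathbf{R}$ and $\mathbf{G}$ share the same block symmetry and first Bianchi identity, the combined cyclic identity does not split formally; one genuinely has to find the correct contraction that distinguishes them and then check that no residual Ricci-type term obstructs the isolation before Lemma \ref{lem.2} can be invoked.
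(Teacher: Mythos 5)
Your parts (a) and (c) are fine, and your Ricci-identity computation giving
\[
R(\overline{U},\overline{V})\mathbf{R}=-\,\overline{d}A(\overline{U},\overline{V})\,\mathbf{R}-(\overline{d}B+A\wedge B)(\overline{U},\overline{V})\,\mathbf{G}
\]
is the correct starting point. The gap is exactly where you flag it: the decoupling in (b) is never carried out, and the trace scheme you sketch does not obviously work. After applying Lemma \ref{lem.1}(e) you are left with the \emph{sum} of two three-pair cyclic expressions, one against $\mathbf{R}$ and one against $\mathbf{G}$; Lemma \ref{lem.2} applies only to a single such cyclic sum, and since $\mathbf{R}$ and $\mathbf{G}$ share the block symmetries and the first Bianchi identity, contracting merely introduces $\mathrm{Ric}$-terms coupled to the unknown $2$-forms rather than separating them. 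As written, (b) is not proved.

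The paper sidesteps the decoupling entirely by changing tensors. Double contraction of the generalized recurrence relation gives $\stackrel{h}{\nabla}r=rA+n(n-1)B$, whence (see (\ref{eq.con1}) in the proof of Theorem \ref{thm.3}) $\stackrel{h}{\nabla}C=A\otimes C$: the concircular tensor is \emph{simply} recurrent with the single form $A$, the $B\otimes G$ term having been absorbed. Differentiating once more and antisymmetrizing via Lemma \ref{lem.1}(g) yields the one-term identity $R(\overline{U},\overline{V})\mathbf{C}=-\overline{d}A(\overline{U},\overline{V})\,\mathbf{C}$, so Lemma \ref{lem.1}(f) together with the $\mathbf{C}$-form of Lemma \ref{lem.2} gives $\overline{d}A=0$ with no decoupling needed. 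Substituting $\overline{d}A=0$ back into your displayed $\mathbf{R}$-identity leaves only the $\mathbf{G}$-term, and Lemma \ref{lem.1}(e) with the $\mathbf{G}$-form of Lemma \ref{lem.2} then forces $\overline{d}B+A\wedge B=0$. If you replace your trace argument with this passage through $C$, the remainder of your proposal, including part (c), goes through exactly as you wrote it.
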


\begin{proof} The proof of {(\bf a)} is easy.\\
Now, we prove {(\bf b)}.
 Let $(M,L)$ be horizontally integrable and generalized recurrent with
recurrence forms $A$ and $B$. Then, by Theorem \ref{thm.3}, $(M,L)$ is concirculary recurrent, i.e., $ \stackrel{h}{\nabla}C=A\otimes C$, by (\ref{eq.con1}). Consequently, $\stackrel{h}{\nabla}\textbf{C}=A\otimes\textbf{ C}$. Hence,

\begin{equation*}
  \stackrel{h}{\nabla}\stackrel{h}{\nabla}\textbf{C}= (\stackrel{h}{\nabla}A)\otimes \textbf{C}
  +A\otimes \stackrel{h}{\nabla}\textbf{C}
   =(\stackrel{h}{\nabla}A+A\otimes A)\otimes \textbf{C}.
\end{equation*}
From which, taking into account Lemma \ref{lem.1}{\bf(g)}, we obtain
\begin{eqnarray*}
   {R}(\overline{U},\overline{V})\textbf{C}&=&-({\overline{d}}A)(\overline{U},\overline{V})\textbf{C}.\label{eq.16a}
\end{eqnarray*}
Hence, using Lemma
\ref{lem.1}(\textbf{f}), it follows that
\begin{equation*}
   \mathfrak{S}_{\overline{U},\overline{V};\,\overline{W},\overline{X};\,\overline{Y},\overline{Z}}
   \set{{\overline{d}}A(\overline{U},\overline{V})\textbf{C}(\overline{W},\overline{X},\overline{Y},\overline{Z})
  }=0.
\end{equation*}
From which, together with Lemma \ref{lem.2}, we conclude that
\begin{equation}\label{da}
   {\overline{d}}A=0
\end{equation}

On the other hand, from (\ref{eq.aa}), we obtain
\begin{eqnarray*}
    \stackrel{h}{\nabla}\stackrel{h}{\nabla}\textbf{R}&=& (\stackrel{h}{\nabla}A+A\otimes A)\otimes \textbf{R}
  +(A\otimes B+ \stackrel{h}{\nabla}B)\otimes\textbf{G}.
\end{eqnarray*}
From which, taking into account (\ref{da}) and Lemma \ref{lem.1}, we get
\begin{eqnarray}
  {R}(\overline{U},\overline{V})\textbf{R}&=&
     -(\overline{d}B+A\wedge B)(\overline{U},\overline{V})\textbf{G}.\label{eq.16}
\end{eqnarray}
Hence, from  Lemma \ref{lem.1}(\textbf{f}), we obtain
\begin{eqnarray*}
   \mathfrak{S}_{\overline{U},\overline{V};\,\overline{W},\overline{X};\,\overline{Y},\overline{Z}}
   \set{(\overline{d}B+A\wedge B)(\overline{U},\overline{V})\textbf{G}(\overline{W},\overline{X},\overline{Y},\overline{Z})
  }&=&0.
\end{eqnarray*}
Therefore, $\overline{d}B+A\wedge B$ vanishes identically.

Finally,  the proof of ({\bf c}) follows from ({\bf b}) and (\ref{eq.16}).
\end{proof}

\begin{prop} \label{prop.3a} Let $(M,L)$ be a  horizontally integrable generalized 2-recurrent Finsler manifold of dimension $n\geq3$ with
recurrence forms $\alpha, \mu$ and non-zero  constant scalar curvature $r$, then we have:
  \begin{description}
  \item[(a)]  $\alpha$ and $\mu$ are symmetric scalar 2-forms.
  \item[(b)] $R(\overline{X},\overline{Y}){\bf R}=0$.
 \end{description}
     \end{prop}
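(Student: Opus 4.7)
The plan is to exploit the constancy of $r$ to collapse the generalized 2-recurrence into a purely ``concircular'' form, and then to mirror the argument of Proposition~\ref{prop.3} by applying the Ricci commutation identity (Lemma~\ref{lem.1}(\textbf{g}), in its tensorial version) in combination with Lemmas~\ref{lem.1}(\textbf{f}) and~\ref{lem.2}.

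I would begin by double-contracting the defining identity $\stackrel{h}{\nabla}\stackrel{h}{\nabla} R = \alpha\otimes R + \mu\otimes G$ with the metric. The left hand side becomes $\stackrel{h}{\nabla}\stackrel{h}{\nabla} r$, which vanishes because $r$ is constant; the right hand side becomes $r\alpha + n(n-1)\mu$, exactly as in the contraction leading to (\ref{eq.con}). This yields the key relation $\mu = -\tfrac{r}{n(n-1)}\alpha$, and in particular $\alpha\neq 0$ since $\mu\neq 0$ and $r\neq 0$. Substituting back, the hypothesis collapses to $\stackrel{h}{\nabla}\stackrel{h}{\nabla} R = \alpha\otimes C$, and since $r$ is constant and $\stackrel{h}{\nabla} G = 0$ this is also $\stackrel{h}{\nabla}\stackrel{h}{\nabla} C = \alpha\otimes C$. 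Passing to the $(0,4)$-version via $\stackrel{h}{\nabla} g = 0$ gives $\stackrel{h}{\nabla}\stackrel{h}{\nabla}\mathbf{C} = \alpha\otimes\mathbf{C}$.

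For part~(\textbf{a}) I would next antisymmetrise this last identity in the two differentiation slots. By the tensorial analogue of Lemma~\ref{lem.1}(\textbf{g}), the commutator of the two $\stackrel{h}{\nabla}$'s is precisely the action of $R(\overline{X},\overline{Y})$ on $\mathbf{C}$, which gives
\begin{equation*}
(R(\overline{X},\overline{Y})\mathbf{C})(\overline{W},\overline{Z},\overline{U},\overline{V}) = \bigl(\alpha(\overline{Y},\overline{X}) - \alpha(\overline{X},\overline{Y})\bigr)\mathbf{C}(\overline{W},\overline{Z},\overline{U},\overline{V}).
\end{equation*}
Taking the cyclic sum over the three pairs of arguments, the left hand side vanishes by Lemma~\ref{lem.1}(\textbf{f}), so Lemma~\ref{lem.2} applied to the $\pi(2)$-form $(\overline{X},\overline{Y})\mapsto \alpha(\overline{Y},\overline{X}) - \alpha(\overline{X},\overline{Y})$ forces this form to vanish. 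Hence $\alpha$ is symmetric, and the relation $\mu = -\tfrac{r}{n(n-1)}\alpha$ immediately transfers this symmetry to $\mu$.

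For part~(\textbf{b}) I would apply the same commutator manoeuvre directly to $\mathbf{R}$: from $\stackrel{h}{\nabla}\stackrel{h}{\nabla}\mathbf{R} = \alpha\otimes\mathbf{R} + \mu\otimes\mathbf{G}$ the Ricci identity yields
\begin{equation*}
R(\overline{X},\overline{Y})\mathbf{R} = \bigl(\alpha(\overline{Y},\overline{X}) - \alpha(\overline{X},\overline{Y})\bigr)\mathbf{R} + \bigl(\mu(\overline{Y},\overline{X}) - \mu(\overline{X},\overline{Y})\bigr)\mathbf{G},
\end{equation*}
and the right hand side vanishes by~(\textbf{a}). The main obstacle I expect is the careful justification of the tensorial extension of Lemma~\ref{lem.1}(\textbf{g}) from $\pi$(1)-forms to $(0,4)$-tensors, together with verification that the implicit non-triviality hypothesis of Lemma~\ref{lem.2} (namely $\mathbf{C}\not\equiv 0$) is satisfied; the alternative $C\equiv 0$ would force constant sectional curvature and should either be excluded by hypothesis or dispatched as a trivial case.
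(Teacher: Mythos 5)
The paper states Proposition \ref{prop.3a} without any proof, so there is nothing to compare against directly; judged on its own, your argument is correct and is exactly the technique the paper uses for the analogous Proposition \ref{prop.3}: reduce to a single recurrence form via contraction (here, constancy of $r$ gives $r\alpha+n(n-1)\mu=0$, collapsing the hypothesis to $\stackrel{h}{\nabla}\stackrel{h}{\nabla}\mathbf{C}=\alpha\otimes\mathbf{C}$), then apply the Ricci commutation identity together with Lemma \ref{lem.1}(\textbf{f}) and Lemma \ref{lem.2}. The two caveats you flag --- the tensorial extension of Lemma \ref{lem.1}(\textbf{g}) to $(0,4)$-tensors and the implicit requirement $\mathbf{C}\not\equiv0$ in Lemma \ref{lem.2} --- are precisely the same implicit steps the paper takes in its proof of Proposition \ref{prop.3}, so they are not defects relative to the paper's own standard of rigour; note also that in the degenerate case $C\equiv0$ part (\textbf{b}) still follows trivially from your identity $R(\overline{X},\overline{Y})\mathbf{R}=\overline{d}\alpha\otimes\mathbf{C}$.
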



\Section{ Generalized Concircular recurrence}

In this section, we study a new type of Finsler recurrence, namely the generalized concircular recurrence, which generalizes the concircular recurrence investigated in \cite{F1} by the present authors.

\begin{thm}\label{before last}
Let $(M,L)$ be a generalized concircularly recurrent Finsler manifold of dimension $n\geq3$ with
recurrence forms $A, B$ and scalar curvature $r$, then
\begin{description}
  \item[(a)] $(M,L)$ is $G(2CF_{n}) $ provided that $\stackrel{h}{\nabla}A +A\otimes A\neq0$ and $\stackrel{h}{\nabla}B +A\otimes B\neq0$,
  \item[(b)] $(M,L)$ is $GF_{n} $ \emph{(}resp. $GRF_{n}$\emph{)} provided that $B-\frac{rA}{n(n-1)}+\frac{\stackrel{h}{\nabla}{r}}{n(n-1)}\neq0$.
  \end{description}
\end{thm}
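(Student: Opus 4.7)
The plan is to imitate the two-step strategy already used for Theorem \ref{thm.3}: differentiate the defining identity once more to get part (a), and use the decomposition $C=R-\frac{r}{n(n-1)}G$ together with $\stackrel{h}{\nabla}G=0$ to get part (b).

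For part (a), I would start from the defining relation $\stackrel{h}{\nabla}C=A\otimes C+B\otimes G$ and apply $\stackrel{h}{\nabla}$ to both sides. Using $\stackrel{h}{\nabla}G=0$ (which holds because the Cartan connection is metric, so $G$, built from $g$, is parallel), the right-hand side expands as
\begin{equation*}
(\stackrel{h}{\nabla}A)\otimes C+A\otimes\stackrel{h}{\nabla}C+(\stackrel{h}{\nabla}B)\otimes G
=(\stackrel{h}{\nabla}A+A\otimes A)\otimes C+(\stackrel{h}{\nabla}B+A\otimes B)\otimes G,
\end{equation*}
where I have substituted $\stackrel{h}{\nabla}C=A\otimes C+B\otimes G$ inside the second term. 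Setting $\alpha:=\stackrel{h}{\nabla}A+A\otimes A$ and $\mu:=\stackrel{h}{\nabla}B+A\otimes B$, the two hypotheses say exactly that $\alpha\neq 0$ and $\mu\neq 0$, so $(M,L)$ satisfies $\stackrel{h}{\nabla}\stackrel{h}{\nabla}C=\alpha\otimes C+\mu\otimes G$, i.e.\ it is $G(2CF_{n})$. This mirrors the argument in Theorem \ref{thm.3}(a), with $C$ in place of $R$.

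For part (b), I would use $C=R-\frac{r}{n(n-1)}G$ to convert the hypothesis into a statement about $R$. Differentiating $C$ horizontally and using $\stackrel{h}{\nabla}G=0$ gives
\begin{equation*}
\stackrel{h}{\nabla}R=\stackrel{h}{\nabla}C+\frac{\stackrel{h}{\nabla}r}{n(n-1)}\otimes G
=A\otimes C+B\otimes G+\frac{\stackrel{h}{\nabla}r}{n(n-1)}\otimes G.
\end{equation*}
Replacing $A\otimes C=A\otimes R-\frac{rA}{n(n-1)}\otimes G$ and collecting terms yields
\begin{equation*}
\stackrel{h}{\nabla}R=A\otimes R+\Bigl(B-\tfrac{rA}{n(n-1)}+\tfrac{\stackrel{h}{\nabla}r}{n(n-1)}\Bigr)\otimes G,
\end{equation*}
so under the stated hypothesis $(M,L)$ is $GF_{n}$ with recurrence forms $A$ and $B^{*}:=B-\frac{rA}{n(n-1)}+\frac{\stackrel{h}{\nabla}r}{n(n-1)}$.

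To obtain the $GRF_{n}$ statement I would then take the appropriate trace of this identity. Using the definition $G(\overline{X},\overline{Y})\overline{Z}=g(\overline{X},\overline{Z})\overline{Y}-g(\overline{Y},\overline{Z})\overline{X}$, the trace of $G$ over its first and third slots is a multiple of $g$, so contracting $\stackrel{h}{\nabla}R=A\otimes R+B^{*}\otimes G$ produces $\stackrel{h}{\nabla}\text{Ric}=A\otimes\text{Ric}+\widetilde{B}\otimes g$ for a scalar $1$-form $\widetilde{B}$ proportional to $B^{*}$; this is exactly $GRF_{n}$. The only point that really requires care is keeping track of the combinatorial factor from the contraction of $G$, which is routine and parallels the passage from \eqref{eq.3} to the displayed Ricci identity in the proof of Theorem \ref{thm.2}(b); I do not expect any genuine obstacle, since the whole argument is an algebraic manipulation built on the previously established facts $\stackrel{h}{\nabla}G=0$ and $\stackrel{h}{\nabla}g=0$.
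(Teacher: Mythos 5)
Your proposal is correct and follows essentially the same route as the paper's own proof: part (a) by differentiating $\stackrel{h}{\nabla}C=A\otimes C+B\otimes G$ once more and substituting back, and part (b) by expanding $C=R-\frac{r}{n(n-1)}G$ with $\stackrel{h}{\nabla}G=0$ and then tracing to reach the Ricci identity (the paper's contraction factor is $B_2=(n-1)B_1$, which is nonzero exactly when your $B^{*}$ is, so the omitted combinatorial constant is indeed harmless).
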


\begin{proof} ~\par
\vspace{4pt}
 \noindent\textbf{(a)}  Let $(M,L)$ be generalized concircularly recurrent.
 Then, $\stackrel{h}{\nabla} {C}= A\otimes {C}+B\otimes {G},$  with $ A\neq 0 \neq B$. Consequently,
      \begin{eqnarray}
      \stackrel{h}{\nabla} \stackrel{h}{\nabla} {C}&{=}& (\stackrel{h}{\nabla}A\otimes {C}+A\otimes \stackrel{h}{\nabla}{C})
       +\stackrel{h}{\nabla}B\otimes {G}, \nonumber \\
        &=& (\stackrel{h}{\nabla}A\otimes {C}
         +A\otimes A\otimes {C}+A\otimes B\otimes {G})
       +\stackrel{h}{\nabla}B\otimes {G}   \nonumber \\
       &{=}& (\stackrel{h}{\nabla}A
         +A\otimes A)\otimes {C}+(\stackrel{h}{\nabla}B+A\otimes B)\otimes {G}   \nonumber \\
          &{=}& \alpha\otimes {C}+\mu\otimes {G}   \nonumber .
      \end{eqnarray}
where $\alpha:=\stackrel{h}{\nabla}A +A\otimes A$
 and $\mu:=\stackrel{h}{\nabla}B+A\otimes B$. If $\alpha$ and $\mu$ are none-zero, then  $(M,L)$ is $G(2CF_{n})$.

\vspace{4pt}

\noindent\textbf{(b)}  As $\stackrel{h}{\nabla} {C}= A\otimes {C}+B\otimes {G},$  with $ A\neq 0 \neq B$, then
      \begin{eqnarray}
      \stackrel{h}{\nabla} ({R}-\frac{r}{n(n-1)}\, {G})&{=}&
        A\otimes ({R}-\frac{r}{n(n-1)}\, G)+B\otimes {G}  \nonumber \\
          \stackrel{h}{\nabla}{R}-\frac{\stackrel{h}{\nabla}{r}}{n(n-1)}\otimes {G}&{=}&
        A\otimes ({R}-\frac{r}{n(n-1)}\, G)+B\otimes {G},  \,\, \text{since} \stackrel{h}{\nabla}G=0 \nonumber \\
         \stackrel{h}{\nabla}{R}&{=}&
        A\otimes {R}+(B-\frac{rA}{n(n-1)}+\frac{\stackrel{h}{\nabla}{r}}{n(n-1)})\otimes {G}   \label{eq.2}\\
        \stackrel{h}{\nabla}{R}&{=}&
        A\otimes {R}+B_{1}\otimes {G}\nonumber ,
      \end{eqnarray}
where   $B_{1}:=B-\frac{rA}{n(n-1)}+\frac{\stackrel{h}{\nabla}{r}}{n(n-1)}$. Since $B_{1}\neq0$, then $(M,L)$ is $GF_{n}$.

         \vspace{2pt}

 On the other hand, from (\ref{eq.2}), we obtain
      \begin{eqnarray}
        \stackrel{h}{\nabla}\text{Ric}&{=}&A\otimes \text{Ric}+\frac{1}{n}(n(n-1)B-rA+\stackrel{h}{\nabla}r)\otimes {g} \label{new}\\
        &{=}& A\otimes \text{Ric}+B_{2}\otimes {g}  \nonumber ,
      \end{eqnarray}
 where $B_2=(n-1)B_1$. This completes the proof.
\end{proof}

\begin{lem}\label{thm.1c} Let $(M,L)$ be a horizontally integrable generalized concircularly recurrent Finsler manifold with recurrence forms $A$ and $B$.
The scalar curvature $r$ of $(M,L)$ is horizontally parallel if and only if\, $\, 2rA=2n\,A\circ \text{\em{Ric}}_{o}-n(n-1)(n-2)B$,
where ${\text{\em{Ric}}}_o$ is defined by $g({\text{\em{Ric}}}_{o}\overline{X},\overline{Y}) := \text{\em{Ric}}(\overline{X},\overline{Y})$.
\end{lem}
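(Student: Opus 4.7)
My plan is to derive a single scalar identity relating $\stackrel{h}{\nabla}r$, $A$, $B$, $r$ and $A\circ \text{Ric}_o$, from which the ``if and only if'' will follow because the coefficient of $\stackrel{h}{\nabla}r$ will come out to be $n-2$, which is nonzero since $n\geq 3$. The identity will be obtained by writing $\stackrel{h}{\nabla}r$ in two different ways using the machinery already established in the proof of Theorem~\ref{before last}.

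The starting point is the consequence of generalized concircular recurrence already exploited in Theorem~\ref{before last}(b). Expanding $\stackrel{h}{\nabla}C = A\otimes C + B\otimes G$ via $C = R - \frac{r}{n(n-1)}G$ and $\stackrel{h}{\nabla}G=0$ gives
\[
 \stackrel{h}{\nabla}R = A\otimes R + B_1\otimes G, \qquad B_1:=B-\frac{rA}{n(n-1)}+\frac{\stackrel{h}{\nabla}r}{n(n-1)},
\]
and tracing this once produces the Ricci relation~(\ref{new}), which may be written
\[
 \stackrel{h}{\nabla}\text{Ric} = A\otimes\text{Ric} + (n-1)B_1\otimes g.
\]

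First I would compute $\stackrel{h}{\nabla}r$ in two different ways. Fully contracting the Ricci relation with the inverse metric gives
\[
 \stackrel{h}{\nabla}r \;=\; rA + n(n-1)B_1. \qquad (*)
\]
On the other hand, since $(M,L)$ is horizontally integrable, Lemma~\ref{lem.1}(a)--(d) supplies the first Bianchi identity, the pair-swap symmetry, the symmetry of Ric, and the second Bianchi identity for $R$; together with $\stackrel{h}{\nabla}g=0$ (Theorem~\ref{th.1}(a)), the classical double contraction of the second Bianchi identity then produces the familiar formula $2\,\mathrm{div}(\text{Ric}) = \stackrel{h}{\nabla}r$. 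Applying the divergence operator to the Ricci relation, and using the symmetry of Ric to identify $\text{Ric}(A^\sharp,Z) = (A\circ\text{Ric}_o)(Z)$, I obtain
\[
 \stackrel{h}{\nabla}r \;=\; 2\,A\circ\text{Ric}_o + 2(n-1)B_1. \qquad (**)
\]

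Equating $(*)$ and $(**)$ and rearranging yields the clean 1-form identity
\[
 2\,A\circ\text{Ric}_o - rA \;=\; (n-1)(n-2)B_1.
\]
Substituting the formula for $B_1$, multiplying through by $n$ to clear denominators, and collecting the $rA$ and $\stackrel{h}{\nabla}r$ contributions, I expect to arrive at
\[
 (n-2)\,\stackrel{h}{\nabla}r \;=\; 2n\,A\circ\text{Ric}_o - 2rA - n(n-1)(n-2)B.
\]
Because $n\geq 3$, the factor $n-2$ is nonzero, and the equivalence $\stackrel{h}{\nabla}r=0 \iff 2rA = 2n\,A\circ\text{Ric}_o - n(n-1)(n-2)B$ follows at once.

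The one step deserving attention is the contracted Bianchi identity $2\,\mathrm{div}(\text{Ric}) = \stackrel{h}{\nabla}r$ used in $(**)$. Its derivation is purely algebraic, relying on the first Bianchi identity, pair-swap symmetry, symmetry of Ric, the second Bianchi identity, and $\stackrel{h}{\nabla}g=0$, all of which are in place here thanks to horizontal integrability (Lemma~\ref{lem.1}) and the metric property of the Cartan connection (Theorem~\ref{th.1}(a)). Given that identity, the remainder of the proof is routine arithmetic.
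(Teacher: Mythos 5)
Your proposal is correct and in substance identical to the paper's proof: both rest on the second Bianchi identity of Lemma \ref{lem.1}(c) applied to $\stackrel{h}{\nabla}R=A\otimes R+B_{1}\otimes G$ followed by a double contraction, and both land on the same scalar identity $(n-2)\stackrel{h}{\nabla}r=2n\,A\circ\text{Ric}_{o}-2rA-n(n-1)(n-2)B$. The only difference is organizational --- you first package the double contraction as the abstract identity $2\,\mathrm{div}(\text{Ric})=\stackrel{h}{\nabla}r$ and then apply it to the traced relation, whereas the paper contracts the Bianchi identity applied to the recurrence relation directly.
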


\begin{proof}
By (\ref{eq.2}) and Lemma \ref{lem.1}{\bf(c)}, we obtain
\begin{eqnarray*}
       && A(\overline{W})R(\overline{X},\overline{Y})\overline{Z}+A(\overline{X})R(\overline{Y},\overline{W})\overline{Z}
        +A(\overline{Y})R(\overline{W},\overline{X})\overline{Z}\\
        &&+\frac{1}{n(n-1)}\{n(n-1)B(\overline{W})- rA(\overline{W})+\stackrel{h}{\nabla}{r}(\overline{W})\}
         \{g(\overline{X},\overline{Z})\overline{Y}-g(\overline{Y},\overline{Z})\overline{X}\}\\
        &&+\frac{1}{n(n-1)}\{n(n-1)B(\overline{X})- rA(\overline{X})+\stackrel{h}{\nabla}{r}(\overline{X})\}
         \{g(\overline{Y},\overline{Z})\overline{W}-g(\overline{W},\overline{Z})\overline{Y}\}\\
        &&+\frac{1}{n(n-1)}\{n(n-1)B(\overline{Y})- rA(\overline{Y})+\stackrel{h}{\nabla}{r}(\overline{Y})\}
         \{g(\overline{W},\overline{Z})\overline{X}-g(\overline{X},\overline{Z})\overline{W}\}=0.
\end{eqnarray*}
Contracting the above relation with respect to ${\overline{Y}}$, given that $g(\overline{X},\overline{\sigma}):=A(\overline{X})$, we get
\begin{eqnarray*}
       && A(\overline{W}){\text{{Ric}}}(\overline{X},\overline{Z})-A(\overline{X}){\text{{Ric}}}(\overline{W},\overline{Z})
        +\textbf{R}(\overline{W},\overline{X},\overline{Z}, \overline{\sigma})\\
        &&+\frac{1}{n}\{n(n-1)B(\overline{W})- rA(\overline{W})+\stackrel{h}{\nabla}{r}(\overline{W})\}
         g(\overline{X},\overline{Z})\\
        &&-\frac{1}{n}\{n(n-1)B(\overline{X})- rA(\overline{X})+\stackrel{h}{\nabla}{r}(\overline{X})\}
        g(\overline{Z},\overline{W})\\
        &&+\frac{1}{n(n-1)}\{n(n-1)B(\overline{X})- rA(\overline{X})+\stackrel{h}{\nabla}{r}(\overline{X})\}
         g(\overline{W},\overline{Z})\\
        &&-\frac{1}{n(n-1)}\{n(n-1)B(\overline{W})- rA(\overline{W})+\stackrel{h}{\nabla}{r}(\overline{W})\}
         g(\overline{X},\overline{Z})=0.
\end{eqnarray*}
This Relation reduces to
\begin{eqnarray*}
       && A(\overline{W})\text{Ric}(\overline{X},\overline{Z})-A(\overline{X})\text{Ric}(\overline{W},\overline{Z})
        +\textbf{R}(\overline{W},\overline{X},\overline{Z}, \overline{\sigma})\\
        &&+\frac{n-2}{n(n-1)}\{n(n-1)B(\overline{W})- rA(\overline{W})+\stackrel{h}{\nabla}{r}(\overline{W})\}
         g(\overline{X},\overline{Z})\\
        &&-\frac{n-2}{n(n-1)}\{n(n-1)B(\overline{X})- rA(\overline{X})+\stackrel{h}{\nabla}{r}(\overline{X})\}
        g(\overline{Z},\overline{W})=0.
\end{eqnarray*}
Contracting the above relation with respect to $\overline{X}$ and $\overline{Z}$, we obtain
$$2rA(\overline{W})-2nA(\text{Ric}_{o}\overline{W})+n(n-1)(n-2)B(\overline{W})+(n-2)\stackrel{h}{\nabla}{r}(\overline{W})=0.$$
Hence, the result follows.
\end{proof}

\begin{thm}\label{last}
Let $(M,L)$ be a horizontally integrable generalized concircularly recurrent Finsler manifold with recurrence forms $A$ and $B$.
If the scalar curvature $r$ of $(M,L)$ is constant, then $(M,L)$ is a $GRF_{n}$.
\end{thm}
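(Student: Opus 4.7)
The proof plan is to specialize equation (\ref{new}) of Theorem \ref{before last}(b) to the case $\stackrel{h}{\nabla}r=0$ and verify that the resulting identity has the shape demanded by Definition \ref{def.2a}(c). The horizontal integrability hypothesis is reserved for the final delicate step.

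First, since $r$ is constant on $TM$ we immediately get $\stackrel{h}{\nabla}r=0$. Unfolding the generalized concircular recurrence $\stackrel{h}{\nabla}C=A\otimes C+B\otimes G$ by means of $C=R-\frac{r}{n(n-1)}G$ and $\stackrel{h}{\nabla}G=0$ (which holds because the Cartan connection is metric and $G$ is built algebraically from $g$) reproduces equation (\ref{eq.2}), which under our hypothesis collapses to
\[
\stackrel{h}{\nabla}R \;=\; A\otimes R+\Bigl(B-\tfrac{rA}{n(n-1)}\Bigr)\otimes G.
\]
Tracing this identity on the relevant pair of arguments, exactly as in the derivation of (\ref{new}), yields
\[
\stackrel{h}{\nabla}\text{Ric}\;=\;A\otimes\text{Ric}+B_{2}\otimes g,\qquad B_{2}\;:=\;\tfrac{1}{n}\bigl(n(n-1)B-rA\bigr).
\]
Because $A\neq 0$ by hypothesis, this is formally the structure required to declare $(M,L)$ a $GRF_{n}$.

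The one subtle point — and the main obstacle I anticipate — is verifying that the new recurrence form $B_{2}$ does not vanish identically, since Definition \ref{def.2a}(c) insists both 1-forms be non-zero. Here I would invoke Lemma \ref{thm.1c}: constancy of $r$ forces it to be horizontally parallel, so the lemma furnishes the integrability relation $2rA=2n\,A\circ\text{Ric}_{o}-n(n-1)(n-2)B$. Eliminating $n(n-1)B$ between this relation and the definition of $B_{2}$ collapses the latter to $B_{2}=\frac{1}{n-2}\bigl(2A\circ\text{Ric}_{o}-rA\bigr)$; any vanishing of $B_{2}$ would therefore force the Einstein-type rigidity $A\circ\text{Ric}_{o}=\tfrac{r}{2}A$, which is incompatible with the genericity built into the hypotheses (in particular with $B\neq 0$ when $r\neq 0$). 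Consequently $B_{2}$ is a genuine non-zero 1-form and $(M,L)\in GRF_{n}$, completing the proof.
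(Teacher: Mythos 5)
Your derivation of the identity $\stackrel{h}{\nabla}\text{Ric}=A\otimes\text{Ric}+B_{2}\otimes g$ with $B_{2}=\frac{1}{n}(n(n-1)B-rA)$ follows the paper's route exactly (specializing (\ref{new}) to $\stackrel{h}{\nabla}r=0$), and your algebra combining this with Lemma \ref{thm.1c} to rewrite $B_{2}$ as $\frac{1}{n-2}(2A\circ\text{Ric}_{o}-rA)$, equivalently $\frac{n(n-1)}{2}B-A\circ\text{Ric}_{o}$, is correct and agrees with the paper's $D$. The problem is the last step. You correctly identify that the whole weight of the theorem rests on showing $B_{2}\neq0$, but you then dispose of it by asserting that $A\circ\text{Ric}_{o}=\frac{r}{2}A$ is ``incompatible with the genericity built into the hypotheses.'' There is no genericity hypothesis in the theorem, and this relation is not self-evidently contradictory; as written, this is an assertion, not an argument. (The parenthetical ``with $B\neq0$ when $r\neq0$'' also quietly introduces a case split on $r\neq0$ that you never resolve.)

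The paper closes this gap with a concrete chain that you are missing. Suppose $B_{2}=0$. Then $rA=n(n-1)B$ (directly from $B_{2}=\frac{1}{n}(n(n-1)B-rA)$, or equivalently by substituting $A\circ\text{Ric}_{o}=\frac{n(n-1)}{2}B$ back into the relation of Lemma \ref{thm.1c}). But then the coefficient of $G$ in (\ref{eq.2}), namely $B-\frac{rA}{n(n-1)}+\frac{\stackrel{h}{\nabla}r}{n(n-1)}$, vanishes, so $\stackrel{h}{\nabla}R=A\otimes R$; tracing twice gives $\stackrel{h}{\nabla}r=rA$, and constancy of $r$ forces $rA=0$, hence $n(n-1)B=0$, hence $B=0$ --- contradicting the standing assumption that $B$ is a non-zero recurrence form. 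You should replace your appeal to ``genericity'' with this (or an equivalent) explicit contradiction; without it the proof is incomplete at precisely the point you yourself flagged as the main obstacle.
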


\begin{proof}
If the scalar curvature $r$ of $(M,L)$ is constant, then $\stackrel{h}{\nabla}r=0$. Hence, in view of Lemma \ref{thm.1c}, we get
\begin{equation} \label{new2}
    rA=nA\circ {\text{{Ric}}}_{o}-\frac{n(n-1)(n-2)}{2}B.
\end{equation}
 From which, together (\ref{new}), we obtain
\begin{eqnarray}
               \stackrel{h}{\nabla}{\text{Ric}}&{=}&
        A\otimes {\text{Ric}}+\left(\frac{n(n-1)}{2}B-A\circ \text{Ric}_{o}\right)\otimes {g} \nonumber \\
        &=&A\otimes {\text{Ric}}+D \otimes g \label{feq} ,
 \end{eqnarray}
where $D:=\frac{n(n-1)}{2}B-A\circ \text{Ric}_{o}$.
\par
Now, we show that $D\neq0$. Assume the contrary,  then
$$A\circ \text{Ric}_{o}=\frac{n(n-1)}{2}B.$$
 Substituting into (\ref{new2}), we obtain
\begin{equation}\label{tt}
  rA=n(n-1)B.
\end{equation}
From which together with (\ref{eq.2}), noting that the scaler curvature $r$ is constant, we get $rA=0$.
Hence, again by (\ref{tt}), $B=0$. This is a contradiction.\\
Therefore, by (\ref{feq}), $(M,L)$ is  $GRF_{n}$ (as $D\neq0$).
\end{proof}

\begin{rem}
Both Theorem \ref{before last}(b) and Theorem \ref{last} state roughly that, under certain conditions, a $GCF_n$ manifold is a $GRF_n$ manifold. The difference between the two results is that in Theorem \ref{before last}(b) the condition is posed on the recurrence forms $A$ and $B$, whereas in Theorem \ref{last} the condition is posed on the geometric structure of the underlying manifold ($r$ is constant, $\widehat{R}=0$).
\end{rem}

\bigskip
\noindent\textbf{Concluding Remarks}

\vspace{6pt}
Three classes of recurrence in Finsler geometry  are introduced and investigated. The interrelationships among these classes of recurrence are studied.
The following diagram presents concisely the most important results of the paper, where an arrow means "if ... then".
Here are some comments on this table:
\begin{itemize}
  \item Continuous arrows represent results (theorems) of the paper. Dashed arrows represent examples of results that can be deduced from continuous arrows.
  \item Conditions posed on the recurrence forms are not written in the diagram. The written conditions are those posed on the geometric structure of the underlying manifold.
  \item One can deduce the following result from the table:
  $$F_n\stackrel{r\neq0, \widehat{R}=0}\Longleftrightarrow CF_n$$
  This is one of the main result of \cite{F1}.
  \item Among other new important results that can be deduced from the table, we set:
  $$GF_n\Longleftrightarrow CF_n$$
  $$F_n\stackrel{r\neq0, \widehat{R}=0}\Longleftrightarrow GF_n$$
  $$GCF_n\stackrel{r\neq0}\Longrightarrow CF_n$$
\end{itemize}

\begin{center}\vspace{-0.5 cm}
\includegraphics[width = 17 cm ]{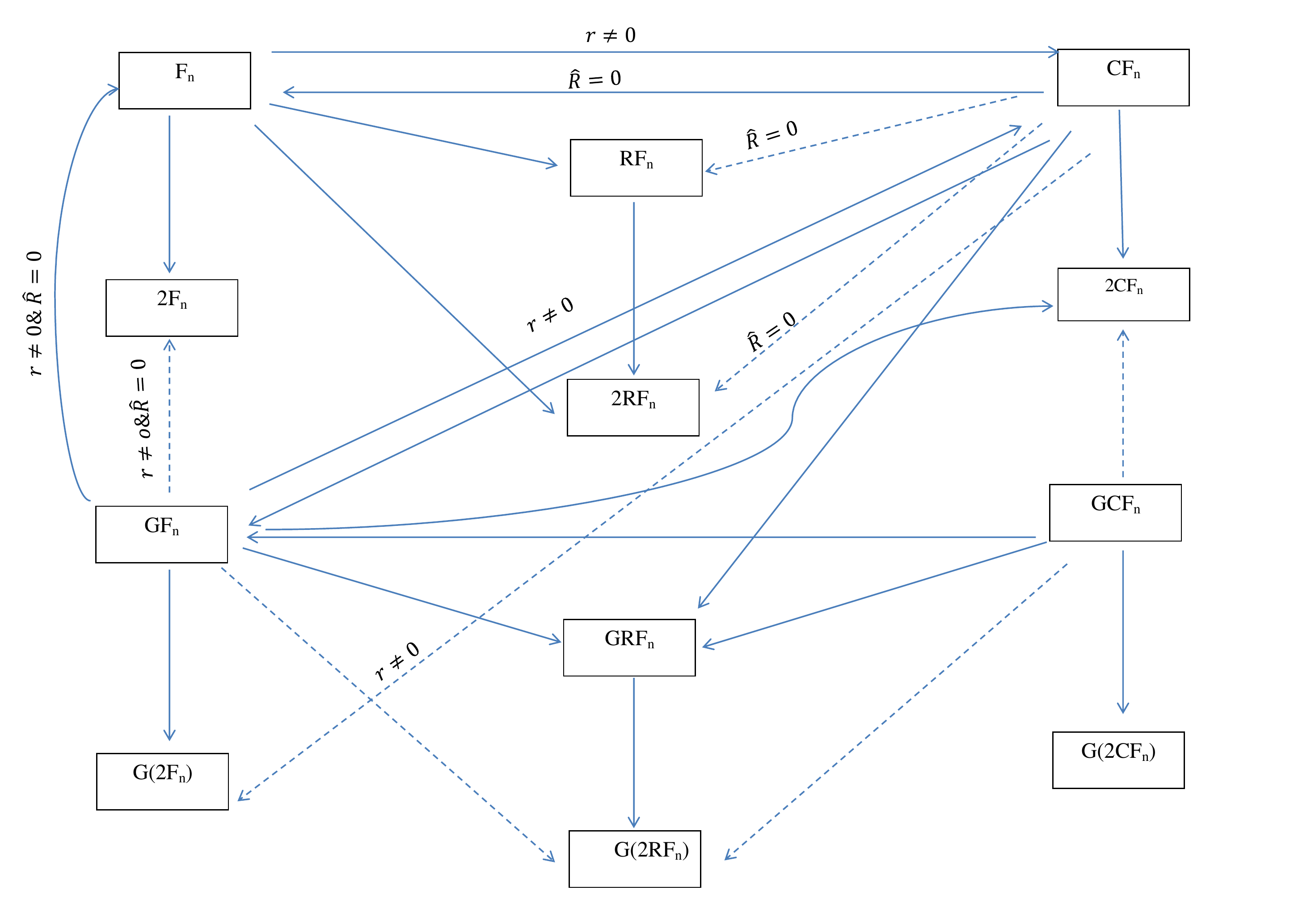}
\end{center}
\vspace{-0.9 cm}
\begin{center}\small{Figure 1. Relationships among different types of recurrent Finsler manifolds}
\end{center}


\providecommand{\bysame}{\leavevmode\hbox
to3em{\hrulefill}\thinspace}
\providecommand{\MR}{\relax\ifhmode\unskip\space\fi MR }
\providecommand{\MRhref}[2]{%
  \href{http://www.ams.org/mathscinet-getitem?mr=#1}{#2}
} \providecommand{\href}[2]{#2}

\end{document}